\def\SetBibliographyCyrillicFontfamily{\fontfamily{cmr}}%
\def\Cyr#1{\bgroup\SetBibliographyCyrillicFontfamily
                  \fontencoding{T2A}\selectfont{#1}\egroup}
\newif\ifforceblacklinks\forceblacklinksfalse
\def\ZhukFullName{\Cyr{\CYRD\cyrm\cyri\cyrt\cyrr\cyri\cyrishrt{}
                       \CYRZH\cyru\cyrk{}}}%
\def\PublicationTitle{The number of clones determined by
                      disjunctions of unary relations}%
\def\CorrespondingAuthor{Mike Behrisch}%
\def\SecondAuthor{Edith Vargas-Garc\'{i}a}%
\def\ThirdAuthor{\ZhukFullName{} (Dmitriy Zhuk)}%
\newdimen{\oldfontdimen}%
\def\TUWname{\foreignlanguage{german}{%
             \oldfontdimen=\fontdimen2\font%
             \bgroup%
             \fontdimen2\font=0.97\fontdimen2\font
             Tech\-ni\-sche Uni\-ver\-si\-t\"{a}t Wi\nlb{}en%
             \egroup
             \fontdimen2\font=\oldfontdimen%
             }}%
\def\InstitutDMG{\foreignlanguage{german}{%
             In\-sti\-tut f\"{u}r Dis\-kre\-te Ma\-the\-ma\-tik
             und Geo\-me\-trie}}%
\def\PostleitzahlWien{\mbox{A-1040} Vienna}%
\def\ULeeds{ITAM}%
\def\SchoolName{Departamento Acad\'{e}mico de Matem\'aticas}%
\def\EdithAddress{R\'{\i}o Hondo No.~1,
                  Col. Tizap\'{a}n San \'{A}ngel,
                  Del. \'{A}lvaro Obreg\'{o}n,
                  C.P.~01080 M\'{e}xico, D.F.
                  }%
\def\MoscowState{Lomonosov Moscow State University}%
\def\DmitrijDept{Department of Mathematics and Mechanics}%
\def\DmitrijSubDept{Mathematical Theory of Intelligent Systems}%
\def\DmitrijAddress{Vorobjovy Hills,
                    Moscow,
                    Russian Federation,
                    119899}%
\def\PublicationTopic{Cardinality of the lattice of clones determined
                      by disjunctively definable relations over unary
                      predicates}%
\def\PublicationKeywords{clone,
                         disjunctive definition,
                         unary relation,
                         cross,
                         clausal constraint,
                         signed logic%
                         }
\theoremstyle{plain}
  \newtheorem{theorem}{Theorem}
  \newtheorem{lemma}[theorem]{Lemma}
  \newtheorem{proposition}[theorem]{Proposition}
  \newtheorem{corollary}[theorem]{Corollary}
\theoremstyle{definition}
  \newtheorem{definition}[theorem]{Definition}
  \newtheorem{remark}[theorem]{Remark}
\newcommand{\m}[1]{\mbox{\ensuremath{#1}}}
\DeclareRobustCommand{\nbdd}[1]{\m{#1}\nobreakdash-\hspace{0pt}}
\DeclareRobustCommand{\dash}{\nolinebreak\hspace{0pt}-\hspace{0pt}}
\DeclareRobustCommand{\nlb}{\penalty10000\hskip0pt\relax}%
\newcommand{\ie}{i.e.\xspace}
\newcommand{\eg}{e.g.\xspace}
\newcommand{\wrt}{w.r.t.\xspace}
\newcommand{\N}{\mathds{N}}
\newcommand{\Np}{\mathds{N}_{+}}
\newcommand{\powerset}[1]{\ensuremath{\mathfrak{P} \apply{#1}}}
\newcommand{\subs}{\subseteq}
\newcommand{\preserves}{\ensuremath{\rhd}}
\newcommand{\defeq}{\mathrel{\mathop:}=}
\DeclareRobustCommand{\lset}[2]{\ensuremath{\mleft\{\mleft.#1\ \vphantom{#2}\mright|\ #2\mright\}}}
\newcommand{\set}[1]{\ensuremath{\mleft\{ #1 \mright\}}}
\newcommand{\apply}[1]{\ensuremath{\mleft( #1 \mright)}}
\newcommand{\fapply}[1]{\ensuremath{\mleft[ #1 \mright]}}
\newcommand*{\abs}[1]{\ensuremath{\mleft|#1\mright|}}
\newcommand{\factorBy}[2]{\ensuremath{#1 / #2}}
\newcommand{\functionhead}[3]{\ensuremath{#1\colon #2 \longrightarrow #3}}
\newcommand{\function}[5]{\ensuremath{\begin{array}{cccc}
#1\colon & #2 & \longrightarrow & #3 \\
    & #4 & \longmapsto     & \displaystyle #5
\end{array}}}
\newcommand{\liste}[2]{\ensuremath{#1_{1},\ldots,#1_{#2}}}
\newcommand{\listen}[2]{\ensuremath{#1_{0},\ldots,#1_{#2 - 1}}}
\renewcommand{\phi}{\varphi}
\renewcommand{\rho}{\varrho}
\DeclareMathOperator{\id}{id}
\DeclareMathOperator{\im}{im}
\newcommand{\name}[1]{\textsc{#1}}
\DeclareMathOperator{\Cr}{Cross}%
\DeclareMathOperator{\pr}{pr}%
\DeclareMathOperator{\emb}{emb}%
\DeclareMathOperator{\Pfin}{\mathfrak{P}_{\mathrm{fin}}}%
\newcommand{\powersetfin}[1]{\ensuremath{\Pfin\apply{#1}}}%
\newcommand{\inV}{\prime}%
\newcommand{\poL}{\backprime}%
\newcommand{\CarrierSet}[1][]{
  \ifthenelse{\equal{#1}{}}{%
    \ensuremath{A}
  }{%
    \ensuremath{A^{#1}}
  }
}
\DeclareMathOperator{\PolOp}{Pol}%
\newcommand{\Pol}[1]{\PolOp_{A} #1}%
\DeclareMathOperator{\OpOp}{O}%
\newcommand{\Ops}{\OpOp_A}%
\newcommand{\composition}[2]{\ensuremath{#1\circ\mleft(#2\mright)}}
\newcommand{\eni}[1]{\ensuremath{e^{(n)}_{#1}}}%
\DeclareMathOperator{\DDOp}{DD}
\newcommand{\DD}[1][\Gamma]{\ensuremath{\DDOp\apply{#1}}}%
\DeclareMathOperator{\ptOp}{pt}
\newcommand{\pt}[1]{\ensuremath{\ptOp\apply{#1}}}%
\DeclareMathOperator{\CRelOp}{R}
\newcommand{\Runary}[1]{\ensuremath{\CRelOp\apply{#1}}}%
\newcommand{\below}{\sqsubseteq}%
\newcommand{\ordrel}{\leq}%
\newcommand{\lt}{\ordrel}%
\newcommand{\genericOrder}{P}%
\newcommand{\order}[1][\genericOrder]{\ensuremath{\mathbb{#1}}}%
\newcommand{\ordset}[3]{\ensuremath{\order[{#1}] = \apply{#2, #3}}}%
\newcommand{\poset}[1][{\genericOrder}]{\ordset{#1}{#1}{\ordrel}}%
\newcommand{\downset}[2][{\order}]{\ensuremath{%
                        \mathop{\downarrow_{#1}}\mathopen{}#2}}%
\DeclareMathOperator{\DSOp}{DS}%
\newcommand{\DS}[1][{\order}]{\DSOp\apply{#1}}%
\newcommand{\upset}[2][{\order}]{\ensuremath{%
                        \mathop{\uparrow_{#1}}\mathopen{}#2}}%
\DeclareMathOperator{\USOp}{US}%
\newcommand{\US}[1][{\order}]{\USOp\apply{#1}}%
\newcommand{\restrordrel}[2][\lt]{\ensuremath{%
                         \mathrel{#1\mathclose\restriction_{#2}}}}%
\DeclareMathOperator{\suppOP}{supp}%
\newcommand{\supp}[1]{\suppOP\apply{#1}}%
\newcommand{\YF}[1][Y]{\ensuremath{#1_{\mathrm{F}}}}%
\newcommand{\Ysubs}[2][Y]{\ensuremath{#1_{\subs \mathnormal{#2}}}}%
\numberwithin{equation}{section} 
\begin{document}

\thispagestyle{empty}
\selectlanguage{british}

\title{\PublicationTitle}
\author[M. Behrisch]{\CorrespondingAuthor}%
\address{\InstitutDMG\\
         \TUWname\\
         \PostleitzahlWien\\
         Austria}%
\email{behrisch@logic.at}%
\thanks{This is a manuscript version of an article to appear in
        Theory of Computing Systems.
        A restricted version of this result was presented at the
        89.~Ar\-beits\-ta\-gung All\-ge\-mei\-ne Al\-ge\-bra
        (89th Workshop on General Algebra), AAA89, that took place in
        Dres\-den, Germany, from 27 February to 1 March 2015.}
\thanks{The research of the first author was partly support by the
        Austrian Science Fund (FWF) under grant I836-N23, and by
        the OeAD KONTAKT project CZ 04/2017 ``Ordered structures for
        non-classical logics''. The second author acknowledges partial
        support by the Asociaci\'{o}n Mexicana de Cultura A.C}
\author[E. Vargas-Garc\'{i}a]{\SecondAuthor}%
\address{\SchoolName\\
         \ULeeds\\
         \EdithAddress}%
\email{edith.vargas@itam.mx}%

\author[D. Zhuk]{\ThirdAuthor}%
\address{\DmitrijSubDept\\
         \DmitrijDept\\
         \MoscowState\\
         \DmitrijAddress}%
\urladdr{http://intsys.msu.ru/en/staff/zhuk/}%
\email{zhuk.dmitriy@gmail.com}%
\date{\today}
\keywords{\PublicationKeywords}
\subjclass[2010]{Primary:
  08A40; 
  Secondary:
  08A02, 
  08A99. 
  }%

\begin{abstract}
We consider finitary relations (also known as crosses) that are definable via finite
disjunctions of unary relations, \ie\ subsets, taken from a fixed finite
parameter set~\m{\Gamma}. We prove that whenever~\m{\Gamma} contains at
least one non\dash{}empty relation distinct from the full carrier set,
there is a countably infinite number of polymorphism clones determined
by relations that are disjunctively definable from~\m{\Gamma}. Finally, we
extend our result to finitely related polymorphism clones and countably
infinite sets~\m{\Gamma}.
\end{abstract}
\maketitle

\section{Introduction}\label{sect:intro}
Constraint Satisfaction Problems (CSPs) offer a uniform framework to
study algorithmic problems. In one of the simplest forms one is given a
conjunctive formula over some chosen parameter set of finitary
relations~\m{Q} and is asked to decide whether the formula is satisfiable.
Even in this basic manifestation many important problems can be encoded
as CSPs, for instance, graph \nbdd{k}colourability, unrestricted Boolean
satisfiability
(SAT), Boolean \nbdd{3}satisfiability (\nbdd{3}SAT) and further variants of
this problem, solvability of sudokus, the \nbdd{n}queens problem, more
generally the exact cover problem, and many others.
\par
There is an active stream of research producing results regarding the
decision complexity of CSPs and its variants, using various
approaches~\cite{%
HellNesetril_CSPDichotomyUndirectedGraphs,%
SchnoorSchnoor_PartialPolymorphismsandCSP,%
BulatovValerioteResultsOnTheAlgebraicApproach2CSP,%
MarotiMcKenzieExistenceThmsForWeaklySymmetricOperations,%
LaroseTessonUniversalAlgebraAndHardnessResultsForCSP,%
BermanIdziakMarkovicMcKenzieValerioteWillardVarietiesWithFewSubalgebrasOfPowers,%
BartoKozikAbsorbingSubalgebrasCyclicTermsCSP,%
BartoKozikWillard_NUConstraintsBoundedPathwidthDuality,%
Barto_CSPandUA,%
BartoKozik_CSPLocalConsistencyMethods,%
BulinDelicJacksonNiven_FinerReductionCSPDigraphs,%
BartoPinsker_infiniteCSP,%
MayrChen_QCSPonMonoids,%
ChenValerioteYuichiTestingAssignments2CSPs,%
BartoKozik_RobustlySolvableCSPs,%
DapicMarkovicMartin_QCSPSemicompleteDigraphs,%
BartoKozik_AbsorptionInUA,%
BartoKrokhinWillard_PolymorphismsHowTo
}.
Moreover, recently there have been credible claims regarding the
solution~\cite{BulatovCSPproofArchive,ZhukCSPProofArchive,ZhukISMVL2017CSPproof}
of Feder and Vardi's famous CSP dichotomy
conjecture~\cite{FederVardiCSP}
stating that any CSP on a finite set can either be decided in polynomial
time (is \emph{tractable}) or is NP-complete, otherwise.
\par

A more algebraic formulation of a CSP is given by fixing a relational
structure~\m{\mathbb{A}} of finite signature on a finite set~\m{A} with
set of basic relations~\m{Q}. The question to decide is, for any
finite relational structure~\m{\mathbb{B}} of the same signature
as~\m{\mathbb{A}}, whether there is a homomorphism from~\m{\mathbb{B}}
to~\m{\mathbb{A}}. A basic reduction result attributed to
Jeavons~\cite{JeavonsAlgebraicStructureOfCSP} implies that any two
CSPs on the same carrier set~\m{\CarrierSet} parametrized by finite sets
of relations \m{Q_1} and \m{Q_2} sharing the same polymorphism clone
(see Section~\ref{sect:prelim}) have the same complexity behaviour (up
to polynomial time many-one reductions). This means, as far as their
complexity is concerned, it is not necessary to examine more CSPs than
there are finitely related clones on a given set.
\par

Hermann et
al.~\cite{CreignouHermannKrokhinSalzerComplexityOfClausalConstraintsOverChains}
studied CSPs involving so\dash{}called \emph{clausal constraints} over
totally ordered finite domains. Their problem can be understood within
the previously introduced CSP paradigm as a CSP given by a finite set of
relations of the form
\[\lset{(\liste{x}{p},\liste{y}{q})}{x_1\geq
a_1 \lor \dotsm \lor x_p\geq a_p \lor y_1\leq b_1\lor\dotsm\lor
y_q\leq b_q}.\]
These are called \emph{clausal relations} (over chains) and have been
studied more extensively
in~\cite{VargasCRelsCclones,%
         BehVargasCclonesAndCRelations,%
         BehVarUniqueInclusionsOfMaxCclones2018,%
         VargasMaxMinCMonoids,%
         Edith-thesis}.
\par
One of the main motivations for the CSPs studied
in~\cite{CreignouHermannKrokhinSalzerComplexityOfClausalConstraintsOverChains}
comes from many\dash{}valued logic, more precisely, from regular signed
logic over totally ordered finite sets of truth values, as
described in~\cite{HaehnleComplexityMVLogics,%
                   HaehnleAutomatedDeductionMVLogics}.
In fact, the satisfiability problem associated with regular signed
conjunctive normal form formulae over chains can be expressed as a CSP
over clausal relations (or with respect to clausal constraints).
In~\cite{CreignouHermannKrokhinSalzerComplexityOfClausalConstraintsOverChains}
a complete classification of complexity was achieved in terms of the
involved clausal patterns, establishing a dichotomy between tractability
and NP-completeness. The authors left open the problem to
algebraically describe all CSPs on the same domain whose complexity is
equivalent to one of their problems via Jeavons's reduction, with
particular focus on the tractable cases. This problem
is in fact asking for a description of all clones (with
tractable CSP) associated with clausal relations, called
\emph{clausal clones} in~\cite{VargasCRelsCclones}.
Since there is a continuum of clones on finite, at least
three\dash{}element sets, a first necessary step to ensure the
feasibility of answering such a question is to determine the cardinality
of the lattice of all clausal clones, which is another open problem
from~\cite{Edith-thesis},
see also~\cite[Section~1]{BehVarUniqueInclusionsOfMaxCclones2018}.%
\par

We address the open problem
from~\cite{CreignouHermannKrokhinSalzerComplexityOfClausalConstraintsOverChains}
by answering this feasibility question in the affirmative way. It turns
out that solving this problem is less convoluted when generalizing from
clausal relations to relations defined as disjunctions over arbitrary
subsets, not just upsets or downsets of finite total orders. In
this way, for finite carrier sets, we include in particular \emph{all} signed logics discussed
in~\cite{HaehnleComplexityMVLogics}: general regular signed logic with
respect to any order, monosigned logic and full signed logic.
Moreover, relations defined via disjunctions of unary relations have
seen applications in general algebra for more than three decades, see,
\eg, \cite{SzendreiIdempotentAlgebrasWithRestrictionsOnSubalgebras}. If~\m{n} is the arity of such a relation, it is also
known under the term \emph{\nbdd{n}dimensional
cross}~\cite{KearnesSzendreiCubeTermBlockersWithoutFiniteness,
             OprsalTaylorsModularityConjAndRelatedProblems}.
When the unary relations of a cross are subuniverses of a given algebra,
crosses have recently become prominent as a means to characterize the
non\dash{}existence of \emph{\nbdd{n}dimensional cube terms} in
idempotent
varieties~\cite{KearnesSzendreiCubeTermBlockersWithoutFiniteness}.
Symmetric crosses, \ie, disjunctions of only one non\dash{}trivial
subuniverse, have appeared earlier as so\dash{}called \emph{cube term
blockers}~\cite{MarkovicMarotiFinitelyRelatedClonesAndAlgebrasWithCubeTerms}.
\par
Our main result is that for any finite non\dash{}trivial set of unary
relations~\m{\Gamma} on any carrier set, there are exactly
\nbdd{\aleph_0}many polymorphism clones determined by relations that are
disjunctively definable over~\m{\Gamma} (\ie, by crosses over~\m{\Gamma}).
\par
We achieve this theorem by relating the number of such clones to the
number of downsets of a certain order on finite powers of the natural
numbers. The latter can be bounded above by~\m{\aleph_0} using
\name{Dickson}'s Lemma. Finally, we prove that our bound is tight by
exhibiting an infinite chain of clones whenever the parameter
set~\m{\Gamma} contains a non\dash{}empty relation~\m{\gamma} distinct
from the full domain.
\par
On a concluding note, we observe that our result extends to
non\dash{}trivial countably infinite sets of unary relations~\m{\Gamma},
when one is only interested in polymorphism clones of finite subsets of
disjunctively definable relations over~\m{\Gamma}.
\par
The sections of this paper should be read in consecutive order.
The following one introduces some notation and prepares basic
definitions and facts concerning clone and order theory.
Section~\ref{sect:clones-unary-disj} links polymorphism clones of relations
being disjunctively definable over unary relations with downsets of a poset
on~\m{\N^I}, and the final section concludes the task by counting these and
deriving our results.

\section{Preliminaries}\label{sect:prelim}
\subsection{Sets, functions and relations}\label{subsect:funrel}
We write \m{\N=\set{0,1,2,\dotsc}} for the set of \emph{natural numbers}
and use \m{\Np} for \m{\N\setminus\set{0}}. \emph{Inclusion} between
sets~\m{A} and~\m{B} is denoted by \m{A\subs B}, as opposed to \emph{proper
inclusion} \m{A\subset B} or \m{A\subsetneq B}. The \emph{powerset}
\m{\powerset{A}} is the set of all subsets of~\m{A}; \m{\powersetfin{A}}
the set of all finite subsets of~\m{A}. The \emph{cardinality} of~\m{A} is
written as~\m{\abs{A}}. It is convenient for us to use \name{John von
Neumann}'s model of~\m{\N} where each \m{n\in\N} is the set
\m{n=\set{0,\dotsc,n-1}}. If \m{\functionhead{f}{A}{B}} and
\m{\functionhead{g}{B}{C}} are functions, their \emph{composition} is the
map \m{\functionhead{g\circ f}{A}{C}} given by \m{g\circ f\apply{a} =
g\apply{f\apply{a}}} for all \m{a\in A}. Moreover, if \m{U\subs A} and
\m{V\subs B}, we write \m{f\fapply{U}} for the \emph{image}
\m{\lset{f\apply{u}}{u\in U}} of~\m{U} under~\m{f} and
\m{f^{-1}\fapply{V}} for the \emph{preimage}
\m{\lset{a\in A}{f\apply{a}\in V}} of~\m{V} with respect to~\m{f}. The
\emph{image of\/~\m{f}} is denoted by
\m{\im{f}\defeq f\fapply{\CarrierSet}}; the \emph{kernel of\/~\m{f}},
denoted by \m{\ker f}, is the equivalence relation
\m{\lset{\apply{a_1,a_2}\in\CarrierSet[2]}{f\apply{a_1}=f\apply{a_2}}}.
Clearly, every member~\m{f\apply{a}} of the image of~\m{f} is in
one\dash{}to\dash{}one correspondence with the kernel class
\m{\fapply{a}_{\ker f}}, \ie, we have a bijection between \m{\im{f}} and
the \emph{factor set} \m{\factorBy{A}{\ker f}} of all equivalence
classes by the kernel of~\m{f}.
\par
If~\m{I} and~\m{\CarrierSet} are sets, the direct (Cartesian) power
\m{A^I} is the set of all maps \m{\functionhead{f}{I}{\CarrierSet}}. This
is also true for finite powers, \ie\ \m{\CarrierSet[n]} where \m{n\in \N}:
we understand \nbdd{n}tuples over~\m{\CarrierSet} as maps from~\m{n}
to~\m{\CarrierSet}; in particular all notions defined for maps, such as
composition, image, preimage, kernel also make sense for tuples. Formally,
an \nbdd{n}tuple \m{x\in\CarrierSet[n]} is given as
\m{x=\apply{x\apply{0},\dotsc,x\apply{n-1}}}, however, if no confusion is
to be expected, we shall also refer to the entries of a tuple by some
other indexing, \eg\ \m{x= \apply{\liste{x}{n}}} or \m{x= \apply{a,b,c}}
etc. Moreover, an \emph{\nbdd{n}ary relation on~\m{\CarrierSet}} is just
any subset \m{\rho\subs\CarrierSet[n]} of \nbdd{n}tuples; an \nbdd{n}ary
operation on~\m{\CarrierSet} is any
function~\m{\functionhead{f}{\CarrierSet[n]}{\CarrierSet}}.
We collect all \emph{finitary (excluding nullary) operations
on~\m{\CarrierSet}} in the set
\m{\Ops=\bigcup_{n\in\Np}\CarrierSet[{A^n}]}.
Multi\dash{}ary functions \m{\functionhead{f}{B^n}{C}} and
\m{\functionhead{\liste{g}{n}}{\CarrierSet[m]}{B}} can be
composed in the following way: putting
\m{\composition{f}{\liste{g}{n}}\apply{a} \defeq
    f\apply{g_1\apply{a},\dotsc,g_n\apply{a}}}
for every \m{a\in\CarrierSet[m]} defines a function
\m{\functionhead{\composition{f}{\liste{g}{n}}}{\CarrierSet[m]}{C}}.
This works for functions and tuples, too.
Namely, we say that
an \nbdd{n}ary function \m{f} \emph{preserves} an \nbdd{m}ary relation
\m{\rho\subs\CarrierSet[m]} and write \m{f\preserves\rho} if for all \m{\apply{\liste{r}{n}}\in\rho^n}
we automatically have \m{\composition{f}{\liste{r}{n}}\in\rho}.
Then for a set \m{Q\subs\bigcup_{m\in\Np}\powerset{\CarrierSet[m]}} of
finitary relations on~\m{\CarrierSet}, we define
\m{F=\Pol{Q}\defeq\lset{f\in\Ops}{\forall \rho\in Q\colon  f\preserves
\rho}} and call this the set of \emph{polymorphisms} of~\m{Q}. This set
of operations forms a \emph{clone on~\m{\CarrierSet}}, \ie, it is closed
under composition (viz.\ \m{\composition{f}{\liste{g}{n}}\in F} whenever
\m{f\in F} is \nbdd{n}ary and \m{\liste{g}{n}\in F} are \nbdd{m}ary) and
contains all projections
\m{\functionhead{\eni{i}}{\CarrierSet[n]}{\CarrierSet}}, \m{0\leq i<
n}, \m{n\in\Np}, given by \m{\eni{i}\apply{\listen{x}{n}} = x_i} for
\m{\apply{\listen{x}{n}}\in\CarrierSet[n]}.
A clone on~\m{\CarrierSet} is \emph{finitely related} if it can be
obtained as \m{\Pol{Q}} for a finite set~\m{Q} of finitary relations.
More information on the importance of finitely related clones in the
context of CSP can be found
in~\cite{BartoKrokhinWillard_PolymorphismsHowTo}.
\par
Note that the preservation relation~\m{\preserves} between finitary
operations and relations, and the Galois correspondence induced by it,
is fundamental for the study of clones on finite
sets~\cite{PoeKal,SzendreiClonesInUniversalAlgebra}.
In point of fact, this paper is concerned with counting the number of
Galois closed sets for a variant of this Galois correspondence where the
relational side is restricted to the subset~$\DD$ of crosses over
a given set~$\Gamma$ of unary relations. For more background information
and basic facts concerning Galois connections in general,
see~\cite[p.~155 et seqq.]{DaveyPriestley} or~\cite{GaWiFCA}.

\subsection{Ordered sets}\label{subsect:posets}
If \m{\poset} is a \emph{partially ordered set}
(\emph{poset}), \ie\ \m{\ordrel} is a binary reflexive, antisymmetric and
transitive relation on \m{\genericOrder}, then a subset \m{X\subs
\genericOrder} is said to be a \emph{downset of\/~\m{\order}} (occasionally
called \emph{order ideal}), if it is closed \wrt\ taking lower bounds. This
means that with every member \m{x\in X} the \emph{principal
downset \m{\downset{\set{x}}\defeq\lset{y\in \genericOrder}{y\lt x}}
generated by~\m{x}} is a subset of~\m{X}. We denote the \emph{set of all
downsets of\/~\m{\order}} by \m{\DS}.
\par

It is easy to see that \m{\DS} forms a closure system on~\m{\genericOrder},
the associated closure operator maps any set \m{Y\subs\genericOrder} to its
closure under lower bounds
\m{\downset{Y}\defeq\bigcup_{y\in Y} \downset{\set{y}}
             = \lset{z\in \genericOrder}{\exists y\in Y\colon z \lt y}},
\ie\ the least downset of \m{\order} containing \m{Y}.
This set is also referred to as \emph{downset generated by~\m{Y}}.
Clearly, a set \m{Y\subs\genericOrder} is a downset if and only if
\m{\downset{Y} = Y}.
\par

The dual notion of a downset is that of an \emph{upset} (\emph{order
filter}), which is a subset \m{X\subs\genericOrder} of a poset \m{\poset}
that is closed under upper bounds. Again the collection \m{\US} of all
upsets of \m{\order} forms a closure system and the corresponding closure
operator \m{\upset{}} is given by adding all upper bounds. Obviously,
complementation establishes a one\dash{}to\dash{}one correspondence between
\m{\DS} and \m{\US}.
\par

A \emph{(homo)morphism} from a poset~\m{\order} to~\m{\order[Q]} is any
monotone map \m{\functionhead{h}{\genericOrder}{Q}}, \ie\ one being
compatible with the respective order relations.
A morphism \m{\functionhead{h}{\order}{\order[Q]}} is a \emph{retraction} if there
exists a homomorphism \m{\functionhead{\tilde{h}}{\order[Q]}{\order}}
that is a right\dash{}inverse to~\m{h}, \ie, satisfies \m{h\circ\tilde{h}= \id_{\order[Q]}}.
An \emph{isomorphism} between posets~\m{\order} and~\m{\order[Q]} is a
retraction \m{\functionhead{h}{\order}{\order[Q]}} that also has a
left\dash{}inverse, \ie\ any order preserving and order reflecting
bijection.
\par

We note the following basic lemmas.
\begin{lemma}\label{lem:retr-maps-ds-to-ds}
If \m{\functionhead{h}{\order}{\order[Q]}} is a homomorphism between
posets\/~\m{\order} and\/~\m{\order[Q]} and a subset
\m{Y\in\DS[{\order[Q]}]} is a downset,
then so is the preimage \m{h^{-1}\fapply{Y}
\in\DS[{\order}]}.
\end{lemma}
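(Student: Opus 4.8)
The plan is to verify the defining closure property of a downset directly from the definitions in Section~\ref{subsect:posets}, using nothing beyond monotonicity of~\m{h} and the assumption that~\m{Y} is closed under lower bounds in~\m{\order[Q]}. No structural theory is required; the whole argument is a one\dash{}step unravelling of the notions "preimage" and "downset".

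Concretely, I would fix an arbitrary element \m{x\in h^{-1}\fapply{Y}}, which by definition of preimage means \m{h\apply{x}\in Y}. To conclude \m{h^{-1}\fapply{Y}\in\DS[{\order}]} it suffices to show that the principal downset \m{\downset{\set{x}}} is contained in \m{h^{-1}\fapply{Y}}. So let \m{z\in\genericOrder} with \m{z\lt x} in~\m{\order}. Since \m{h} is a poset morphism, it is monotone, hence \m{h\apply{z}\lt h\apply{x}} in~\m{\order[Q]}. As \m{Y} is a downset of~\m{\order[Q]} containing \m{h\apply{x}}, the principal downset \m{\downset[{\order[Q]}]{\set{h\apply{x}}}} is a subset of~\m{Y}, so \m{h\apply{z}\in Y}, \ie\ \m{z\in h^{-1}\fapply{Y}}. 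Since \m{z\lt x} was arbitrary, \m{\downset{\set{x}}\subs h^{-1}\fapply{Y}}, and since \m{x} ranged over all of \m{h^{-1}\fapply{Y}}, this set is closed under taking lower bounds, hence a downset of~\m{\order}.

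If one prefers a point\dash{}free formulation, the same reasoning compresses to
\[
  \downset{h^{-1}\fapply{Y}}
    = \bigcup_{x\in h^{-1}\fapply{Y}}\downset{\set{x}}
    \subs \bigcup_{x\in h^{-1}\fapply{Y}} h^{-1}\fapply{\downset[{\order[Q]}]{\set{h\apply{x}}}}
    \subs h^{-1}\fapply{Y},
\]
where the middle inclusion is monotonicity of~\m{h} in the shape \m{h\fapply{\downset{\set{x}}}\subs\downset[{\order[Q]}]{\set{h\apply{x}}}}, and the last inclusion uses \m{\downset[{\order[Q]}]{Y}=Y} (the characterisation of downsets recalled in Section~\ref{subsect:posets}). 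There is essentially no real obstacle here; the only point requiring a modicum of care is bookkeeping in which of the two posets each \m{\lt}\dash{}comparison is taken, so that monotonicity of~\m{h} is invoked with the correct orientation.
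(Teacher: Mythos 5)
Your argument is correct and coincides with the paper's own proof: fix \m{x\in h^{-1}\fapply{Y}} and \m{z\lt x}, use monotonicity of~\m{h} to get \m{h\apply{z}\ordrel h\apply{x}\in Y}, and conclude \m{h\apply{z}\in Y}, \ie\ \m{z\in h^{-1}\fapply{Y}}. The point\dash{}free reformulation is a harmless restatement of the same one\dash{}step argument.
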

\begin{proof}
Let \m{x\in h^{-1}\fapply{Y}} and \m{z\in\order} such that \m{z\lt x}.
As~\m{h} is monotone, we have \m{h\apply{z}\leq h\apply{x} \in Y}. As
\m{Y\in\DS[{\order[Q]}]}, we get \m{h\apply{z}\in Y}, and thus
\m{z\in h^{-1}\fapply{Y}}.
\end{proof}

\begin{lemma}\label{lem:iso-bij-DS}
If \m{\functionhead{h}{\order}{\order[Q]}} is an isomorphism between
posets\/~\m{\order} and\/~\m{\order[Q]}, then
\m{\functionhead{H}{\DS}{\DS[{\order[Q]}]}} given by \m{H\apply{X}\defeq
h\fapply{X}} is a bijection. In particular we have\/
\m{\abs{\DS}=\abs{\DS[{\order[Q]}]}}.
\end{lemma}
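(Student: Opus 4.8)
The plan is to use the fact that an isomorphism $h\colon\order\to\order[Q]$ carries a two-sided inverse $\tilde h\colon\order[Q]\to\order$ that is itself an order isomorphism, hence in particular a poset homomorphism, and to reduce the well-definedness of $H$ to Lemma~\ref{lem:retr-maps-ds-to-ds}. First I would record the elementary observation that, since $h$ is a bijection with inverse $\tilde h$, for every $X\subs\genericOrder$ one has $h\fapply{X}=\tilde h^{-1}\fapply{X}$: indeed $q\in h\fapply{X}$ holds iff $q=h\apply{x}$ for some $x\in X$, which happens iff $\tilde h\apply{q}\in X$. Applying Lemma~\ref{lem:retr-maps-ds-to-ds} to the homomorphism $\tilde h$ and to a downset $X\in\DS$ then gives $H\apply{X}=h\fapply{X}=\tilde h^{-1}\fapply{X}\in\DS[{\order[Q]}]$, so $H$ is a well-defined map $\DS\to\DS[{\order[Q]}]$. (One could equally well argue directly: if $q'\ordrel q=h\apply{x}$ with $x\in X$, then $\tilde h\apply{q'}\ordrel\tilde h\apply{q}=x$ by monotonicity of~$\tilde h$, so $\tilde h\apply{q'}\in X$ and $q'=h\apply{\tilde h\apply{q'}}\in h\fapply{X}$.)

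Next I would note that $\tilde h$ is also an isomorphism, so the same reasoning yields a well-defined map $\tilde H\colon\DS[{\order[Q]}]\to\DS$ given by $\tilde H\apply{Y}\defeq\tilde h\fapply{Y}$. Finally, from $\tilde h\circ h=\id_{\genericOrder}$ and $h\circ\tilde h=\id_{\order[Q]}$ it follows that $\tilde H\apply{H\apply{X}}=\tilde h\fapply{h\fapply{X}}=X$ for every $X\in\DS$ and, symmetrically, $H\apply{\tilde H\apply{Y}}=Y$ for every $Y\in\DS[{\order[Q]}]$. Hence $H$ and $\tilde H$ are mutually inverse bijections, and the cardinality equality $\abs{\DS}=\abs{\DS[{\order[Q]}]}$ is immediate.

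I do not expect any genuine obstacle here; the only point requiring a little care is the bookkeeping identity $h\fapply{X}=\tilde h^{-1}\fapply{X}$, which is precisely what makes Lemma~\ref{lem:retr-maps-ds-to-ds} applicable on both sides and thereby removes the need to re-verify the downset property by hand.
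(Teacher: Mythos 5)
Your proposal is correct and follows essentially the same route as the paper: both identify $h\fapply{X}$ with the preimage of $X$ under the inverse isomorphism and invoke Lemma~\ref{lem:retr-maps-ds-to-ds} to get well-definedness, then note that the inverse map $Y\mapsto h^{-1}\fapply{Y}$ works symmetrically. You merely spell out the bookkeeping identity and the mutual-inverse check in a bit more detail than the paper does.
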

\begin{proof}
By Lemma~\ref{lem:retr-maps-ds-to-ds} the map~\m{H} is well\dash{}defined
because \m{H\apply{X}} is the preimage of \m{X} under the inverse
isomorphism \m{\functionhead{h^{-1}}{\order[Q]}{\order}} of~\m{h}.
The inverse~\m{H^{-1}} is given by \m{H^{-1}\apply{Y} = h^{-1}\fapply{Y}} for
\m{Y\in\DS[{\order[Q]}]}.
\end{proof}
\par

If \m{\poset} is a poset and \m{Y\subs\genericOrder} is a subset, then
\m{\mathord{\restrordrel{Y}}
                           \defeq \mathord{\lt}\cap\apply{Y\times Y}}
denotes the \emph{restricted order relation}, \ie\ the order of the
\emph{subposet\/ \m{\ordset{Y}{Y}{\restrordrel{Y}}} of\/~\m{\order}
induced by \m{Y}}.
\par

\begin{lemma}\label{lem:ds-to-subposets}
If\/~\m{\order} is a poset and \m{X,Y\in\DS}, then
\m{X\cap Y\in\DS[{\order[Y]}]} where
\m{\ordset{Y}{Y}{\restrordrel{Y}}}.
\end{lemma}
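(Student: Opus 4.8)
The plan is to check the defining property of a downset of the induced subposet \m{\order[Y] = \ordset{Y}{Y}{\restrordrel{Y}}} by a direct argument. First I note that \m{X\cap Y\subs Y}, so the assertion is meaningful. Then I take an arbitrary element \m{x\in X\cap Y} together with an arbitrary \m{z\in Y} satisfying \m{z\restrordrel{Y} x}; unfolding the definition of the restricted order this simply means \m{z\lt x} in~\m{\order} with \m{z,x\in Y}. Since \m{x\in X} and \m{X} is, by hypothesis, a downset of~\m{\order}, I obtain \m{z\in X}; as also \m{z\in Y}, this yields \m{z\in X\cap Y}. Thus every principal downset of the subposet \m{\order[Y]} generated by a member of \m{X\cap Y} is contained in \m{X\cap Y}, which is exactly the claim \m{X\cap Y\in\DS[{\order[Y]}]}.

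An alternative route, more in the spirit of the lemmas proved just above, is to observe that the inclusion map \m{\functionhead{\iota}{Y}{\genericOrder}} is monotone as a map from \m{\order[Y]} to~\m{\order}, so that Lemma~\ref{lem:retr-maps-ds-to-ds} applied to~\m{\iota} and the downset~\m{X} of~\m{\order} immediately gives that the preimage \m{\iota^{-1}\fapply{X} = X\cap Y} belongs to \m{\DS[{\order[Y]}]}. I do not expect a genuine obstacle: the statement is a routine verification either way. The only point worth a moment's care is the bookkeeping between the order~\m{\lt} on~\m{\genericOrder} and its restriction~\m{\restrordrel{Y}}, which agree on pairs of elements of~\m{Y} but must be distinguished in order to speak sensibly of ``downset of the subposet''.
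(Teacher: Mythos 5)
Your argument is correct. It differs slightly in route from the paper's own proof, which is a two\dash{}step one\dash{}liner: since downsets are closed under intersection, \m{X\cap Y\in\DS}, and because \m{\order[Y]} carries the restricted order, a downset of\/~\m{\order} contained in~\m{Y} is automatically a downset of~\m{\order[Y]}. Your direct verification unfolds the same content but, as a small bonus, never uses that \m{Y} is a downset of\/~\m{\order}: taking \m{z\in Y} with \m{z\lt x\in X\cap Y}, membership \m{z\in X} follows from \m{X\in\DS} alone, so your argument proves the marginally more general statement with \m{Y\subs\genericOrder} arbitrary (the paper only ever applies the lemma with \m{Y} a downset, so nothing is gained in context, but it is a clean observation). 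Your second route, applying Lemma~\ref{lem:retr-maps-ds-to-ds} to the monotone inclusion \m{\functionhead{\iota}{Y}{\genericOrder}} and noting \m{\iota^{-1}\fapply{X}=X\cap Y}, is also valid and is closest in spirit to how the paper organizes its preliminary lemmas; it likewise dispenses with the hypothesis on~\m{Y}. All three arguments are routine and interchangeable here.
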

\begin{proof}
Downsets are closed with regard to intersections, so \m{X\cap Y} clearly
belongs to \m{\DS}. As \m{\order[Y]} carries the restricted order
of~\m{\order}, we get \m{X\cap Y\in\DS[{\order[Y]}]}.
\end{proof}
\par

Next, we consider a special order on powers of \m{\N}. For any set~\m{I} we
denote the order of \m{\apply{\N, \leq}^I}, which is given by requiring the
relationship~\m{\leq} pointwise, also by~\m{\leq}. For any element
\m{x\in\N^I}, we refer by its \emph{support} to the set
\m{\supp{x}\defeq \lset{i\in I}{x\apply{i}\neq 0}}, \ie\ the preimage
\m{x^{-1}\fapply{\N\setminus\set{0}}}.
Relating two elements \m{x,y\in\N^I} if their supports are dually
contained defines a quasiorder on \m{\N^I}, \ie\ a reflexive and transitive
binary relation. Intersecting this quasiorder with the pointwise
order~\m{\leq}, we obtain the poset
\m{\ordset{\genericOrder}{\N^I}{\below}}, where \m{x\below y} holds for
\m{x,y\in\N^I} if and only if \m{x\leq y} and \m{\supp{y}\subs\supp{x}}. If
\m{x\below y} then for every \m{i\in\supp{x}} we have
\m{0<x\apply{i}\leq y\apply{i}}, \ie\ \m{i\in\supp{y}}. Hence, the
condition \m{x\below y} is equivalent to \m{x\leq y} and
\m{\supp{x}=\supp{y}}.
\par

In the following sections, we shall be interested in the downsets of the
poset \m{\apply{\N^I,\below}}, mostly for finite~\m{I}. In order to count
these we need information about the number of downsets of finite powers of
\m{\apply{\N,\leq}}, which is related to \emph{well\dash{}partial orders}.
\begin{definition}[{\eg~\cite[Definition~5.4.3, p.~113]{%
                   BaaderNipkowTermRewritingAndAllThat}}]%
                   \label{def:wpo}
A poset \m{\poset} is called a \emph{well\dash{}partial order} if for every
sequence \m{\apply{x_i}_{i\in \N}\in\genericOrder^{\N}} there exist indices
\m{i<j} such that \m{x_i\leq x_j}.
\end{definition}

With regard to such orders we note the following basic lemma, which can
also be found in the monograph~\cite{BaaderNipkowTermRewritingAndAllThat}.
\begin{lemma}[{\cite[Lemma~5.4.5, p.~113]{%
               BaaderNipkowTermRewritingAndAllThat}}]%
              \label{lem:dir-prod-of-wpos}
Any finite direct product of well\dash{}partial orders is again a
well\dash{}partial order.
\end{lemma}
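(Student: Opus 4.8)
The plan is first to reduce to the binary case: writing a finite direct product of posets as the product of all but its last factor with the last one, the assertion follows by induction on the number of factors once one knows that the product \m{\order\times\order[Q]} of two well\dash{}partial orders \m{\order} and \m{\order[Q]}, equipped with the pointwise order, is again a well\dash{}partial order. The base case (the empty and the one\dash{}factor products) is immediate since every sequence in a one\dash{}element poset, respectively in \m{\order} itself, already satisfies the defining condition.

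The main auxiliary step I would establish is that in any well\dash{}partial order \m{\order} every sequence \m{\apply{x_i}_{i\in\N}\in\genericOrder^{\N}} admits an infinite weakly ascending subsequence, \ie\ there are indices \m{i_0<i_1<i_2<\dotsb} with \m{x_{i_0}\leq x_{i_1}\leq x_{i_2}\leq\dotsb}. Call an index~\m{i} \emph{terminal} (for the given sequence) if there is no \m{j>i} with \m{x_i\leq x_j}. If infinitely many indices \m{j_0<j_1<\dotsb} were terminal, then the subsequence \m{\apply{x_{j_k}}_{k\in\N}} would contain no pair \m{k<l} with \m{x_{j_k}\leq x_{j_l}}, contradicting the defining property of the well\dash{}partial order~\m{\order}. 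Hence only finitely many indices are terminal; choosing \m{i_0} strictly above all of them and then recursively selecting, for each already constructed non\dash{}terminal index~\m{i_k}, some \m{i_{k+1}>i_k} with \m{x_{i_k}\leq x_{i_{k+1}}} (this \m{i_{k+1}} being again non\dash{}terminal, as \m{i_{k+1}>i_0}) produces the desired ascending subsequence. Alternatively, one may deduce the same from the infinite Ramsey theorem by two\dash{}colouring each pair \m{\set{i,j}} with \m{i<j} according to whether \m{x_i\leq x_j} holds: an infinite homogeneous set in the negative colour is impossible by hypothesis, so there is an infinite ascending subsequence.

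Granting this, let \m{\order} and \m{\order[Q]} be well\dash{}partial orders and take any sequence \m{\apply{\apply{x_i,y_i}}_{i\in\N}} in \m{\genericOrder\times Q}. Applying the auxiliary step to \m{\apply{x_i}_{i\in\N}} in~\m{\order} yields indices \m{i_0<i_1<\dotsb} with \m{x_{i_0}\leq x_{i_1}\leq\dotsb}; applying now the well\dash{}partial\dash{}order property of \m{\order[Q]} to the sequence \m{\apply{y_{i_k}}_{k\in\N}} produces \m{k<l} with \m{y_{i_k}\leq y_{i_l}}. Since \m{i_k<i_l} and \m{x_{i_k}\leq x_{i_l}} by the choice of the subsequence, we obtain \m{\apply{x_{i_k},y_{i_k}}\leq\apply{x_{i_l},y_{i_l}}} in the pointwise order, which is exactly what the definition of a well\dash{}partial order demands for \m{\order\times\order[Q]}. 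The general finite case then follows by the induction announced above.

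I anticipate no genuine obstacle here, the whole argument being an elementary extraction of Ramsey type. The only point requiring a little care is the auxiliary subsequence lemma, specifically checking that after discarding the finitely many terminal indices the recursive construction never stalls; this is precisely where the hypothesis—which, over any poset, forbids both infinite antichains and infinite strictly descending chains—is used, and it is the reason the statement fails for arbitrary posets.
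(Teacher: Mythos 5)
Your proof is correct: extracting an infinite weakly ascending subsequence via the finitely-many-terminal-indices (or Ramsey) argument and then inducting on the number of factors is sound, and the one delicate point---that every index beyond the last terminal one is again non\dash{}terminal, so the recursive construction never stalls---is handled properly. The paper gives no proof of this lemma at all (it is quoted from Baader and Nipkow, Lemma~5.4.5), and your argument is essentially the standard one found there, so the two approaches agree.
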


A poset \m{\poset} satisfies the \emph{descending chain condition}
(\emph{DCC}) if every countably infinite descending sequence
\m{\apply{x_i}_{i\in\N}\in P^\N} eventually stabilizes, \ie\ whenever
\m{x_j\lt x_i} holds for all \m{i\leq j}, then there exists some \m{i\in\N}
such that \m{x_j=x_i} for all \m{j\geq i}.
\par

We readily observe the following consequence concerning descending chains and
anti\dash{}chains, \ie\ subsets of a poset consisting entirely of pairwise
incomparable elements.
\begin{lemma}\label{lem:wpo-no-inf-anti-chain}
If\/~\m{\poset} is a well\dash{}partial order, then\/~\m{\order} does
not contain infinite anti\dash{}chains and satisfies DCC.
\end{lemma}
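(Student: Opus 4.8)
The statement to prove is Lemma~\ref{lem:wpo-no-inf-anti-chain}: if $\poset$ is a well-partial order, then it contains no infinite anti-chain and satisfies DCC.

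The plan is to prove both assertions by contraposition, in each case manufacturing from a putative counterexample a sequence $\apply{x_i}_{i\in\N}$ that witnesses the failure of the defining property in Definition~\ref{def:wpo}, namely a sequence with no indices $i<j$ for which $x_i\leq x_j$.

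First I would treat the anti-chain part. Suppose $\order$ contains an infinite anti-chain; since it is infinite, one may enumerate (a countably infinite subset of) its elements injectively as a sequence $\apply{x_i}_{i\in\N}$ with $x_i\neq x_j$ for $i\neq j$. Because the $x_i$ all lie in one anti-chain, any two distinct ones are incomparable; in particular, for every pair $i<j$ we have $x_i\nleq x_j$. This directly contradicts the well-partial-order property, so no infinite anti-chain can exist.

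Next I would handle DCC. Suppose $\order$ fails DCC; then there is a descending sequence $\apply{x_i}_{i\in\N}$ with $x_{i+1}\leq x_i$ for all $i$ that does not stabilize, so we may pass to a strictly descending subsequence, relabelled again as $\apply{x_i}_{i\in\N}$, with $x_{j}\lt x_i$ (proper inequality) whenever $i<j$. Then for any pair $i<j$ we have $x_j\lt x_i$ but, by antisymmetry and $x_i\neq x_j$, not $x_i\leq x_j$; again this contradicts the well-partial-order condition. Hence $\order$ satisfies DCC. The argument is entirely routine; the only mild point of care is the passage from a non-stabilizing descending chain to a strictly descending one, which is immediate once one discards repeated values, and the observation that a well-partial order is in particular a poset, so antisymmetry is available. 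There is no real obstacle here.
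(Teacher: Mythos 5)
Your proposal is correct and follows essentially the same route as the paper's proof: extract a countably infinite injective enumeration of an anti-chain to contradict the well-partial-order property, and, for DCC, pass from a non-stabilizing descending chain to a strictly descending subsequence whose terms are pairwise non-increasing in the forbidden direction. The minor points you flag (strictness via discarding repetitions, antisymmetry ruling out $x_i\leq x_j$) are exactly the ones the paper's argument relies on as well.
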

\begin{proof}
If \m{A\subs\genericOrder} is an infinite subset such that
\m{\apply{A,\restrordrel{A}}} is an anti\dash{}chain, then there exists a
sequence \m{a = \apply{a_i}_{i\in\N}\in A^\N} such that
\m{\lset{a_i}{i\in\N}\subs A} is a countably infinite subset of \m{A}, and
hence an anti\dash{}chain, too. Therefore, \m{a} violates the defining
property of a well\dash{}partial order.
\par
If\/~\m{\order} did not satisfy DCC, there would be an infinite
descending sequence \m{y\in \genericOrder^{\N}} that does not eventually
stabilise. Then one could construct a
strictly decreasing subsequence \m{x = \apply{x_i}_{i\in\N}} of~\m{y}
satisfying \m{x_j<x_i} for all \m{i\leq j}, in contradiction
to~\m{\order} being a well\dash{}partial order.
\end{proof}
\par

The properties occurring in Lemma~\ref{lem:wpo-no-inf-anti-chain}
suffice to bound the number of downsets of a countable ordered
set~\m{\order}.
\begin{lemma}\label{lem:DCC-finite-antichains-cntbl-implies-cntbl-ds}
If\/~\m{\poset} is a poset on a countable (\ie finite or countably
infinite) set
\m{\genericOrder} satisfying DCC and not containing any infinite
anti\dash{}chains, then the number of downsets of\/~\m{\order} is
countable.
\end{lemma}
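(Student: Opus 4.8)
The plan is to show that every downset of~\m{\order} is uniquely determined by its (necessarily finite) set of minimal elements, and that there are only countably many such finite subsets of a countable ground set.

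First I would establish that in~\m{\order} every downset~\m{X} is generated by its minimal elements. Given \m{X\in\DS}, let \m{M\defeq\lset{x\in X}{x \text{ is minimal in } \apply{X,\restrordrel{X}}}}. The inclusion \m{\downset{M}\subs X} is immediate since \m{M\subs X} and \m{X} is a downset. For the reverse inclusion, take any \m{x\in X}; using DCC I would argue that below~\m{x} (within~\m{X}) there must sit a minimal element of~\m{X}: if not, one could build a strictly descending sequence in~\m{X} starting from~\m{x}, contradicting DCC. Hence \m{x\in\downset{\set{m}}\subs\downset{M}} for some \m{m\in M}, so \m{X=\downset{M}}.

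Next I would observe that~\m{M} is an anti-chain: any two distinct minimal elements of~\m{X} are incomparable by definition of minimality. Since~\m{\order} contains no infinite anti-chains,~\m{M} is finite, so \m{M\in\powersetfin{\genericOrder}}. This gives an injective map \m{\functionhead{\DS\longrightarrow\powersetfin{\genericOrder}}} sending~\m{X} to its set of minimal elements: it is injective because \m{X=\downset{M}} recovers~\m{X} from~\m{M}. Finally, since~\m{\genericOrder} is countable, \m{\powersetfin{\genericOrder}} is countable (it is a countable union over \m{n\in\N} of the sets of \nbdd{n}element subsets, each of which injects into~\m{\genericOrder^n}, itself countable). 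Therefore \m{\abs{\DS}\leq\abs{\powersetfin{\genericOrder}}\leq\aleph_0}, as claimed.

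The only slightly delicate point is the use of DCC to guarantee that each element of a downset lies above some \emph{minimal} element; this needs a small argument (iteratively descending and invoking that the process terminates by DCC), but it is routine. Everything else is bookkeeping about countable unions of finite sets. I would also note in passing that finiteness of the ground set makes the statement trivial, so the content is really the countably infinite case, where DCC and the absence of infinite anti-chains do the work in place of plain finiteness.
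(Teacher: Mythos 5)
There is a genuine gap at the decisive step. You assert that a downset \m{X} satisfies \m{X=\downset{M}}, where \m{M} is the set of minimal elements of \m{X}: from \m{m\lt x} for some minimal \m{m\in X} below \m{x} you conclude ``\m{x\in\downset{\set{m}}}''. This reverses the order: \m{m\lt x} says \m{x\in\upset{\set{m}}}, whereas \m{x\in\downset{\set{m}}} would mean \m{x\lt m} and hence \m{x=m}. A downset is in general \emph{not} determined by its minimal elements, even under your hypotheses: in \m{\apply{\N,\leq}}, which satisfies DCC and has no infinite anti\dash{}chains, every downset \m{\set{0,\dotsc,n}} has \m{\set{0}} as its set of minimal elements, so the map you propose from \m{\DS} to \m{\powersetfin{\genericOrder}} is far from injective and the counting argument collapses.

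The paper's proof avoids exactly this by first passing to complements: complementation is a bijection between \m{\DS} and \m{\US}, and for an \emph{upset} \m{F} the DCC argument you sketch works in the correct direction --- every \m{x\in F} lies above some minimal element of \m{F}, so \m{F=\upset{M\apply{F}}}, and \m{M\apply{F}} is an anti\dash{}chain, hence finite; thus \m{F\mapsto M\apply{F}} is injective into \m{\powersetfin{\genericOrder}}, whose countability you establish exactly as the paper does (that part of your argument is fine). Note also that trying to repair your version by using \emph{maximal} elements of \m{X} does not work: under DCC alone a downset need not possess any maximal elements (take \m{X=\N} in \m{\apply{\N,\leq}}), so the detour through upsets is essential, not cosmetic.
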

\begin{proof}
Complementation bijectively maps downsets of \m{\order} to upsets and vice
versa. Hence, consider any \m{F\in\US}; its set of minimal elements
\m{M\apply{F}} certainly forms an anti\dash{}chain, so by the assumption,
\m{M\apply{F}\subs \genericOrder} is a finite subset. Moreover, it is an
easy consequence of DCC that every element \m{x\in F} satisfies \m{m\leq x}
for some \m{m\in M\apply{F}}. In other words, we have
\m{F = \upset{M\apply{F}}}, which shows that the map
\m{\functionhead{M}{\US}{\powersetfin{\genericOrder}}} is injective.
Finally, the set \powersetfin{\genericOrder} of all finite subsets of
\m{\genericOrder} is countable since for every \m{k\in\N}
the set of all \nbdd{k}element subsets of \m{\genericOrder} is embeddable
into \m{\genericOrder^k}, whose cardinality is bounded by \m{\aleph_{0}}
using the bound on the cardinality of~\m{\genericOrder}.
\end{proof}

\begin{corollary}\label{cor:ds-of-dcc-cntbl-wpo}
Any well\dash{}partial order\/~\m{\poset} on a countable
carrier set~\m{\genericOrder} has countably many downsets.
\end{corollary}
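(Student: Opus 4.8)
The plan is to assemble this corollary directly from the two lemmas immediately preceding it, since all the substantive work has already been done. First I would invoke Lemma~\ref{lem:wpo-no-inf-anti-chain}: as~\m{\order} is assumed to be a well\dash{}partial order, that lemma tells us at once that~\m{\order} contains no infinite anti\dash{}chains and that it satisfies the descending chain condition. Thus the hypotheses of Lemma~\ref{lem:DCC-finite-antichains-cntbl-implies-cntbl-ds} are met — we additionally have that the carrier set~\m{\genericOrder} is countable by assumption — and applying that lemma yields immediately that the set \m{\DS} of downsets of~\m{\order} is countable, which is exactly the claim.

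There is no genuine obstacle here: the corollary is a one\dash{}line consequence of the preceding results, and the only thing to check is that the three hypotheses needed by Lemma~\ref{lem:DCC-finite-antichains-cntbl-implies-cntbl-ds} (countability of the carrier, DCC, absence of infinite anti\dash{}chains) are all available, the latter two being supplied by Lemma~\ref{lem:wpo-no-inf-anti-chain} and the first being part of the hypothesis. One could, if desired, spell out why it is legitimate to chain the lemmas in this way, but this is routine. I would therefore simply write: ``By Lemma~\ref{lem:wpo-no-inf-anti-chain} the well\dash{}partial order~\m{\order} has no infinite anti\dash{}chains and satisfies DCC, so the claim follows from Lemma~\ref{lem:DCC-finite-antichains-cntbl-implies-cntbl-ds}.''
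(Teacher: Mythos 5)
Your proposal is correct and follows exactly the paper's own argument: apply Lemma~\ref{lem:wpo-no-inf-anti-chain} to obtain DCC and the absence of infinite anti\dash{}chains, then combine this with the countability hypothesis to invoke Lemma~\ref{lem:DCC-finite-antichains-cntbl-implies-cntbl-ds}.
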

\begin{proof}
Applying Lemma~\ref{lem:wpo-no-inf-anti-chain} to the assumptions yields
the premiss of
Lemma~\ref{lem:DCC-finite-antichains-cntbl-implies-cntbl-ds}, which entails
the claim.
\end{proof}
\par

Combining the previous observations for the case of finite powers of
\m{\apply{\N,\leq}} gives the following result.
\begin{lemma}\label{lem:inf-N-to-I}
For every finite set~\m{I} the poset \m{\apply{\N,\leq}^I} is a
well\dash{}partial order, does not have any infinite anti\dash{}chains
(\name{Dickson}'s Lemma) and has a countable number of downsets.
\end{lemma}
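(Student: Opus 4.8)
The plan is to derive all three assertions as immediate consequences of the lemmas already assembled above, once the single missing base case is dispatched, namely that $\m{(\N,\leq)}$ itself is a well\dash{}partial order. For this, given any sequence $\m{(x_i)_{i\in\N}\in\N^{\N}}$, I would exploit that $\m{\N}$ is well\dash{}ordered by~$\m{\leq}$: the set $\m{\lset{x_i}{i\in\N}}$ has a least element, attained at some index~$\m{k}$, and then $\m{x_k\leq x_j}$ for every $\m{j>k}$, in particular for some $\m{j>k}$, which is exactly what Definition~\ref{def:wpo} requires. (Alternatively one could split into the bounded case, where pigeonhole produces a repeated value, and the unbounded case; but the well\dash{}ordering argument is the shortest.)

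Next I would invoke Lemma~\ref{lem:dir-prod-of-wpos}: for finite~$\m{I}$ the poset $\m{(\N,\leq)^I}$ is the finite direct product of $\m{\abs{I}}$ copies of the well\dash{}partial order $\m{(\N,\leq)}$, hence again a well\dash{}partial order. This is precisely the classical statement known as \name{Dickson}'s Lemma. The degenerate case $\m{I=\emptyset}$ yields a one\dash{}element poset, which is trivially a well\dash{}partial order and is covered as well.

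From this point the remaining two claims follow without further effort. Applying Lemma~\ref{lem:wpo-no-inf-anti-chain} to the well\dash{}partial order $\m{(\N,\leq)^I}$ shows at once that it has no infinite anti\dash{}chains (and, as a by\dash{}product, that it satisfies DCC). Finally, $\m{\N^I}$ is a countable set, being a finite Cartesian power of the countably infinite set~$\m{\N}$ (or a singleton when $\m{I=\emptyset}$); therefore Corollary~\ref{cor:ds-of-dcc-cntbl-wpo} is applicable and gives that $\m{(\N,\leq)^I}$ has only countably many downsets.

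I do not anticipate any genuine obstacle here: the entire statement is a bookkeeping combination of Lemma~\ref{lem:dir-prod-of-wpos}, Lemma~\ref{lem:wpo-no-inf-anti-chain} and Corollary~\ref{cor:ds-of-dcc-cntbl-wpo}, the only ingredient not already on hand being the essentially trivial verification that the one\dash{}dimensional poset $\m{(\N,\leq)}$ is a well\dash{}partial order, for which the well\dash{}ordering of~$\m{\N}$ suffices.
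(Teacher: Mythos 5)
Your proposal is correct and takes essentially the same approach as the paper: verify that \m{\apply{\N,\leq}} is a well\dash{}partial order, apply Lemma~\ref{lem:dir-prod-of-wpos} to the finite power, and then conclude via Lemma~\ref{lem:wpo-no-inf-anti-chain} and Corollary~\ref{cor:ds-of-dcc-cntbl-wpo}. The only (inessential) difference is the base case, where you use the well\dash{}ordering of~\m{\N} to pick an index attaining the least value of the sequence, while the paper argues via finiteness of the principal downset of the first term of a sequence with pairwise distinct entries.
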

\begin{proof}
First, observe that \m{\apply{\N,\leq}} is a well\dash{}partial order:
namely, for every sequence \m{\apply{n_i}_{i\in \N}\in\N^\N} of pairwise
distinct elements the downset \m{\downset[{\apply{\N,\leq}}]{\set{n_0}}} is
finite, so there is a largest index \m{i\in\N} such that
\m{n_i\in\downset[{\apply{\N,\leq}}]{\set{n_0}}}. However, this means
\m{n_j\geq n_0}, even \m{n_j>n_0}, for all \m{j >i}.
\par
Based upon this, finiteness of \m{I} ensures that \m{\apply{\N,\leq}^I} is
a well\dash{}partial order by Lemma~\ref{lem:dir-prod-of-wpos}; hence
\name{Dickson}'s Lemma follows from Lemma~\ref{lem:wpo-no-inf-anti-chain}.
Moreover, \m{\abs{I}<\aleph_{0}} entails that
\m{\abs{\N^I}\leq \aleph_{0}}, whence
Lemma~\ref{lem:DCC-finite-antichains-cntbl-implies-cntbl-ds} or
Corollary~\ref{cor:ds-of-dcc-cntbl-wpo} bounds the number of downsets
from above.
\end{proof}
\par

\begin{corollary}\label{cor:downsets-of-N-to-I}
We have\/ \m{\abs{\DS[{\apply{\N,\leq}^I}]}=\aleph_{0}}
for all finite \m{I\neq \emptyset}.
\end{corollary}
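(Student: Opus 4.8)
The plan is to combine the upper bound already established with a matching lower bound. By Lemma~\ref{lem:inf-N-to-I} the poset \m{\apply{\N,\leq}^I} has at most countably many downsets, so it remains to exhibit a countably infinite family of pairwise distinct downsets of \m{\apply{\N,\leq}^I}; this is exactly the place where the hypothesis \m{I\neq\emptyset} is needed.

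Since \m{I\neq\emptyset}, I would fix an index \m{i_0\in I} and, for each \m{n\in\N}, put \m{D_n\defeq\lset{x\in\N^I}{x\apply{i_0}< n}}. Each \m{D_n} is a downset of \m{\apply{\N,\leq}^I}: if \m{x\in D_n} and \m{y\leq x} holds pointwise, then in particular \m{y\apply{i_0}\leq x\apply{i_0}<n}, so \m{y\in D_n}. Moreover the assignment \m{n\mapsto D_n} is strictly increasing, hence injective: writing \m{t\in\N^I} for the tuple with \m{t\apply{i_0}=n} and \m{t\apply{i}=0} for all \m{i\in I\setminus\set{i_0}}, one has \m{t\in D_{n+1}\setminus D_n}, so \m{D_n\subsetneq D_{n+1}} for every \m{n\in\N}. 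Thus \m{\set{D_n : n\in\N}} is a countably infinite subset of \m{\DS[{\apply{\N,\leq}^I}]}, which gives \m{\abs{\DS[{\apply{\N,\leq}^I}]}\geq\aleph_0}.

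Together with the upper bound \m{\abs{\DS[{\apply{\N,\leq}^I}]}\leq\aleph_0} from Lemma~\ref{lem:inf-N-to-I}, and using that a countable set containing a countably infinite subset is itself countably infinite, this yields the claimed equality. I do not anticipate any real obstacle: the only subtle point is that the hypothesis \m{I\neq\emptyset} is genuinely required, since for \m{I=\emptyset} the poset \m{\N^\emptyset} is a single point with exactly two downsets, so the statement would fail there.
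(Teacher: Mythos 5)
Your proposal is correct and follows essentially the same route as the paper: the upper bound is taken from Lemma~\ref{lem:inf-N-to-I}, and the lower bound is obtained by exhibiting a countably infinite family of pairwise distinct downsets. The paper uses the principal downsets \m{\downset{\set{\apply{n,\dotsc,n}}}}, \m{n\in\N}, where you use the sets \m{\lset{x\in\N^I}{x\apply{i_0}<n}}; this is only a cosmetic difference in the choice of witnesses.
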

\begin{proof}
The inequality \m{\abs{\DS[{\apply{\N,\leq}^I}]}\leq \aleph_{0}} follows
from Lemma~\ref{lem:inf-N-to-I}; for the converse it suffices to exhibit an
infinite subset of \m{\DS[{\apply{\N,\leq}^I}]}, for instance, infinitely
many principal downsets
\m{\lset{\downset{\set{\apply{n,\dotsc,n}}}}{n\in \N}}.
\end{proof}
\par

\section{Clones determined by disjunctions of unary predicates}%
\label{sect:clones-unary-disj}
We are interested in clones that are determined by relations that are
disjunctively definable by unary predicates. Throughout our whole study
we shall fix a (mostly finite) parameter set~\m{\Gamma} of unary relations on a
given carrier set~\m{\CarrierSet} that we call \emph{(unary) relational
language}. If \m{n\in\Np} and \m{\apply{\liste{\gamma}{n}}\in \Gamma^n}
is any \nbdd{n}tuple of basic relations from~\m{\Gamma},
then every \nbdd{n}ary relation \m{\rho} of the form
\m{\rho = \Runary{\liste{\gamma}{n}}
=\lset{\apply{\liste{x}{n}}\in\CarrierSet[n]}{%
      \bigvee_{1\leq i\leq n} x_i\in\gamma_i}}
is said to be \emph{disjunctively
definable from \m{\Gamma}}. Relations constructed in this way have been
denoted as \m{\Cr\apply{\liste{\gamma}{n}}}
in~\cite{KearnesSzendreiCubeTermBlockersWithoutFiniteness,
         OprsalTaylorsModularityConjAndRelatedProblems}.
The set \m{\DD} consists exactly of all
relations definable in this manner. Note that \m{\DD[\emptyset]=\emptyset}
since we consider only non\dash{}empty disjunctions, \ie, disjunctively
definable relations of positive arity.
\par
We quickly observe that \m{\Runary{\liste{\gamma}{n}}=\CarrierSet[n]}
holds if and only if at least one of the relations \m{\liste{\gamma}{n}}
equals the full set~\m{\CarrierSet}. Namely, if for every
\m{i\in\set{1,\dotsc,n}} the basic unary relation \m{\gamma_i\subsetneq
\CarrierSet} is proper, and \m{x_i\in \CarrierSet\setminus\gamma_i}, then
\m{\apply{\liste{x}{n}}\notin \Runary{\liste{\gamma}{n}}}, so
\m{\Runary{\liste{\gamma}{n}}\subsetneq\CarrierSet[n]}.
Likewise, we note that \m{\Runary{\liste{\gamma}{n}}=\emptyset} if and
only if \m{\gamma_1= \dotsm = \gamma_n=\emptyset}.
\par
It is useful to see that the basic unary relations defining some
\nbdd{n}ary relation \m{\rho\in\DD} can be uniquely reconstructed
from~\m{\rho} in every interesting case, \ie\ whenever
\m{\rho\neq \CarrierSet[n]}.

\begin{lemma}\label{lem:parameter-reconstruction}
The parameter reconstruction map
\[\function{p}{\DD}{\bigcup_{n\in\Np}\Gamma^n}{%
\rho=\Runary{\liste{\gamma}{n}}}{%
\begin{cases}
\apply{\liste{\gamma}{n}} & \text{ if } \rho\subsetneq\CarrierSet[n]\\
\apply{A,\dotsc,A} & \text{ else,}
\end{cases}}\]
is well\dash{}defined.
\end{lemma}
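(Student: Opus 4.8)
The plan is to note that the only thing actually requiring an argument is that the value prescribed by the displayed formula does not depend on the representation $\rho=\Runary{\liste{\gamma}{n}}$ used to present the element $\rho\in\DD$. The arity $n$ is part of the given datum (and for a non-empty $\rho$ it is anyway recovered as the common length of the tuples in $\rho$), so whether $\rho$ equals the full relation $\CarrierSet[n]$ or is properly contained in it is a property of $\rho$ alone; in the ``else'' branch the output $\apply{A,\dotsc,A}$ of length $n$ then manifestly does not depend on the chosen $\gamma_i$. Hence everything reduces to the case $\rho\subsetneq\CarrierSet[n]$, where I would reconstruct the whole tuple $\apply{\liste{\gamma}{n}}$ from the set $\rho$ itself, which forces any two presentations to coincide.

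The key step is the remark that in this case the complement of $\rho$ is a Cartesian product of sets. Directly unfolding the definition,
\[\CarrierSet[n]\setminus\rho
   =\lset{\apply{\liste{x}{n}}\in\CarrierSet[n]}{\forall i\in\set{1,\dotsc,n}\colon x_i\notin\gamma_i}
   =\prod_{i=1}^{n}\apply{\CarrierSet\setminus\gamma_i}.\]
Since $\rho\subsetneq\CarrierSet[n]$, the observation made just before the lemma (that $\Runary{\liste{\gamma}{n}}=\CarrierSet[n]$ iff some $\gamma_i$ equals $\CarrierSet$) forces $\gamma_i\subsetneq\CarrierSet$ for every $i$, so each factor $\CarrierSet\setminus\gamma_i$ is non-empty and the product is a non-empty Cartesian product. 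For such a product the factors are retrieved by the coordinate projections, $\pr_i\fapply{\CarrierSet[n]\setminus\rho}=\CarrierSet\setminus\gamma_i$, whence
\[\gamma_i=\CarrierSet\setminus\pr_i\fapply{\CarrierSet[n]\setminus\rho}\]
for every $i\in\set{1,\dotsc,n}$ — an expression in $\rho$ only. Consequently any two parameter tuples presenting the same $\rho\subsetneq\CarrierSet[n]$ agree coordinate-wise, and $p$ is well-defined.

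I do not anticipate a genuine obstacle; the main (and only) point deserving attention is the non-emptiness of all the factors $\CarrierSet\setminus\gamma_i$, which is exactly what the hypothesis $\rho\subsetneq\CarrierSet[n]$ buys, via the cited characterization of when a cross is full. Should one prefer to avoid Cartesian products, an equivalent route is to observe that $a\in\gamma_i$ holds if and only if every $\apply{\liste{x}{n}}\in\CarrierSet[n]$ with $x_i=a$ belongs to $\rho$: the forward implication is just the $i$-th disjunct of the defining formula, while for the converse, if $a\notin\gamma_i$, one chooses $x_j\in\CarrierSet\setminus\gamma_j$ for $j\neq i$ — possible because $\gamma_j\subsetneq\CarrierSet$ — together with $x_i=a\notin\gamma_i$ to obtain a tuple lying outside $\rho$. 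This again pins down each $\gamma_i$ purely in terms of $\rho$.
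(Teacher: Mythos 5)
Your proof is correct, and its main route packages the argument differently from the paper. The paper proves uniqueness by comparing two parametrizations \m{\Runary{\liste{\gamma}{n}} = \Runary{\liste{\gamma'}{n}} \subsetneq \CarrierSet[n]} directly: fixing~\m{i}, it picks witnesses \m{x_j\in\CarrierSet\setminus\gamma'_j} for \m{j\neq i} and deduces \m{\gamma_i\subs\gamma'_i}, then concludes by symmetry. You instead give an explicit inversion formula: observing that \m{\CarrierSet[n]\setminus\rho=\prod_{i=1}^{n}\apply{\CarrierSet\setminus\gamma_i}} is a Cartesian product with non\dash{}empty factors (non\dash{}emptiness coming, as in the paper, from the characterization of when a cross is full), you recover \m{\gamma_i=\CarrierSet\setminus\pr_i\fapply{\CarrierSet[n]\setminus\rho}}, an expression in~\m{\rho} alone, which immediately forces any two presentations of the same arity to coincide. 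The combinatorial kernel is identical --- choosing an element outside each \m{\gamma_j} --- but your formulation yields a canonical reconstruction of the parameters rather than a two\dash{}sided inclusion argument, which some may find cleaner; the paper's version avoids talking about complements and products and works coordinate by coordinate. Your ``equivalent route'' at the end (membership \m{a\in\gamma_i} iff every tuple with \m{x_i=a} lies in~\m{\rho}) is essentially the paper's own proof. Like the paper, you treat the arity~\m{n} as part of the datum \m{\rho}, which silently sets aside the degenerate presentations of the empty relation when \m{\emptyset\in\Gamma}; this is no worse than the original.
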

\begin{proof}
By definition of~\m{\DD} every disjunctively definable relation \m{\rho}
has a parameter representation \m{\rho = \Runary{\liste{\gamma}{n}}}. We
need to prove that the latter is unique, whenever
\m{\rho\neq\CarrierSet[n]}. For this consider
\m{\apply{\liste{\gamma}{n}}, \apply{\liste{\gamma'}{n}}\in\Gamma^n}
such that
\m{\Runary{\liste{\gamma}{n}} = \Runary{\liste{\gamma'}{n}}
\subsetneq\CarrierSet[n]}. We now show that \m{\gamma_i=\gamma'_i}
holds for all \m{i\in\set{1,\dotsc,n}}. Since our assumption is
symmetric, it suffices to fix \m{i\in\set{1,\dotsc,n}} and to prove that
\m{\gamma_i\subs\gamma'_i}.
Because \m{\rho=\Runary{\liste{\gamma'}{n}}\neq \CarrierSet[n]}, we have
\m{\gamma'_j\subsetneq A} for all \m{j\in\set{1,\dotsc,n}} and so we can
pick \m{x_j\in A\setminus\gamma'_j} for \m{j\neq i}. If now
\m{x_i\in\gamma_i}, we have
\m{\apply{\liste{x}{n}}\in\Runary{\liste{\gamma}{n}} =
\Runary{\liste{\gamma'}{n}}}. By the choice of the \m{x_j} for all
\m{j\neq i}, we must have \m{x_i\in \gamma'_i}, proving
\m{\gamma_i\subs\gamma'_i}.
\end{proof}

It is an obvious consequence of the preceding lemma that the parameter
reconstruction map provides a one-sided inverse to the construction of
relations from unary predicates. That is to say, we have
\m{\Runary{p\apply{\rho}} = \rho} for all \m{\rho\in\DD}. This inverse is
uniquely determined for the non\dash{}trivial relations~\m{\rho}, and it
chooses a canonical representative of all possible parametrizations
when~\m{\rho} is a full power of~\m{\CarrierSet}.
\par

Based on the parameter reconstruction~\m{p} from
Lemma~\ref{lem:parameter-reconstruction}, we can define the
\emph{pattern} of a disjunctively definable relation. Intuitively it
counts how often each \m{\gamma\in\Gamma} occurs in the parameter tuple
\m{p\apply{\rho}}.
More formally, for \m{\rho\in\DD} with \m{p\apply{\rho} =
\apply{\liste{\gamma}{n}}}, \ie\ \m{\rho=\Runary{\liste{\gamma}{n}}},
the tuple \m{\pt{\rho}\in\N^{\Gamma}} maps every \m{\gamma\in\Gamma} to
\m{\pt{\rho}\apply{\gamma} \defeq
                    \abs{p\apply{\rho}^{-1}\fapply{\set{\gamma}}}}
where for both the parameter tuple \m{p\apply{\rho}\in \Gamma^n} and the
pattern \m{\pt{\rho}\in \N^\Gamma} we make use of the ambiguous
interpretation as a tuple and as a map. Note that the length of the
parameter tuple \m{p\apply{\rho}} depends on the arity of \m{\rho},
while the length of the pattern only depends on~\m{\abs{\Gamma}}, which
is usually finite (at least \m{\pt{\rho}} has finite support
in~\m{\Gamma}) and normally not varying for our considerations.
The pattern of a disjunctively definable relation roughly carries the
same information as \emph{clausal
patterns}~\cite[Section~2]{CreignouHermannKrokhinSalzerComplexityOfClausalConstraintsOverChains}
do for clausal relations; however, since we are dealing with a more
generic situation, the notion of clausal pattern had to be
adapted and generalized.
\par

Next we study how the polymorphism clone of a disjunctively definable
relation~\m{\rho} over~\m{\Gamma} changes when duplicating one of its unary
parameter relations. Since polymorphism clones are not affected by variable
permutations of their defining relations, we can restrict our attention
to duplicating the first parameter of \m{\rho\in \DD}.
Letting \m{p\apply{\rho} = \apply{\liste{\gamma}{n}}}, such
a duplication apparently increases the pattern of
\m{\rho =\Runary{\liste{\gamma}{n}}} in precisely one place, while
preserving all the other values:
\[
\pt{\Runary{\gamma_1,\liste{\gamma}{n}}}\apply{\gamma} =
\begin{cases}
\pt{\Runary{\liste{\gamma}{n}}}\apply{\gamma}+1&
                                          \text{if }\gamma=\gamma_1,\\
\pt{\Runary{\liste{\gamma}{n}}}\apply{\gamma} & \text{otherwise.}
\end{cases}\]

\begin{lemma}\label{lem:duplicating-parameters}
For arbitrary unary relations
\m{\liste{\gamma}{n}\subs\CarrierSet} we have
\[\Pol{\set{\Runary{\gamma_1,\liste{\gamma}{n}}}}
 \subs\Pol{\set{\Runary{\liste{\gamma}{n}}}}.\]
\end{lemma}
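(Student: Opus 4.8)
The plan is to exhibit $\Runary{\liste{\gamma}{n}}$ as a variable identification of the \nbdd{(n+1)}ary relation $\Runary{\gamma_1,\liste{\gamma}{n}}$, and then to transfer polymorphisms along this identification. Set $\rho\defeq\Runary{\liste{\gamma}{n}}$ and $\rho'\defeq\Runary{\gamma_1,\liste{\gamma}{n}}$, and for $x=\apply{\liste{x}{n}}\in\CarrierSet[n]$ write $\hat{x}\defeq\apply{x_1,x_1,x_2,\dotsc,x_n}\in\CarrierSet[n+1]$ for the tuple obtained by duplicating the first entry. First I would record the single structural fact that drives the whole argument, immediate from the disjunctive definition and the idempotency of~$\lor$:
\[
x\in\rho \quad\Longleftrightarrow\quad \hat{x}\in\rho' \qquad\text{for every } x\in\CarrierSet[n],
\]
because membership of $\hat{x}$ in $\rho'$ unravels to $x_1\in\gamma_1\lor x_1\in\gamma_1\lor x_2\in\gamma_2\lor\dotsm\lor x_n\in\gamma_n$, which is literally the disjunction $x_1\in\gamma_1\lor\dotsm\lor x_n\in\gamma_n$ defining $x\in\rho$.

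Granting this equivalence, the transfer of polymorphisms is routine. Let $f\in\Pol{\set{\rho'}}$ be \nbdd{k}ary; I must check $f\preserves\rho$. Fix $\liste{r}{k}\in\rho$. By the equivalence above, $\hat{r}_1,\dotsc,\hat{r}_k\in\rho'$, so $\composition{f}{\hat{r}_1,\dotsc,\hat{r}_k}\in\rho'$ because $f\preserves\rho'$. Evaluating this tuple entrywise and using $\hat{r}_j\apply{1}=\hat{r}_j\apply{2}=r_j\apply{1}$ together with $\hat{r}_j\apply{\ell}=r_j\apply{\ell-1}$ for $2\le\ell\le n+1$, one sees that $\composition{f}{\hat{r}_1,\dotsc,\hat{r}_k}=\apply{a_1,a_1,a_2,\dotsc,a_n}$, where $a_i\defeq f\apply{r_1\apply{i},\dotsc,r_k\apply{i}}$ for $1\le i\le n$; that is, it equals $\hat{b}$ for $b\defeq\apply{\liste{a}{n}}=\composition{f}{\liste{r}{k}}$. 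Since $\hat{b}\in\rho'$, the equivalence applied from right to left yields $b\in\rho$, \ie\ $\composition{f}{\liste{r}{k}}\in\rho$. Hence $f\preserves\rho$, and the claimed inclusion $\Pol{\set{\rho'}}\subs\Pol{\set{\rho}}$ follows.

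I do not expect a genuine obstacle here: the statement is the elementary special case of the general principle that polymorphism clones can only grow under primitive positive definitions, the definition in play being merely the identification of the first two coordinates. The one thing to handle with care is the bookkeeping of coordinate indices under the tuples\dash{}as\dash{}maps convention, and it is worth noting that no non\dash{}degeneracy of $\rho$ or $\rho'$ is used anywhere: the pivotal equivalence above is a purely set\dash{}theoretic identity that remains valid when some $\gamma_i$ equals $\emptyset$ or~$\CarrierSet$, so that $\rho$ or $\rho'$ collapses to $\emptyset$ or to a full Cartesian power of~$\CarrierSet$; in those cases $\Pol{\set{\cdot}}$ behaves without surprises.
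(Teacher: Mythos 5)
Your proposal is correct and follows essentially the same route as the paper's proof: duplicate the first entry of each argument tuple, use preservation of \m{\Runary{\gamma_1,\liste{\gamma}{n}}}, and read off membership of \m{\composition{f}{\liste{r}{k}}} in \m{\Runary{\liste{\gamma}{n}}} from the resulting tuple with equal first two entries. Making the equivalence \m{x\in\rho\Leftrightarrow\hat{x}\in\rho'} explicit is a slightly more formal packaging of what the paper does implicitly, but it is the same argument.
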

\begin{proof}
Let \m{k\in\N} and
\m{f\in\Pol{\set{\Runary{\gamma_1,\liste{\gamma}{n}}}}} be a \nbdd{k}ary
polymorphism and \m{r_1,\dotsc,r_k\in\Runary{\liste{\gamma}{n}}}.
Let \m{s_j\in \CarrierSet[n+1]} arise from \m{r_j} by duplicating the
first entry of the tuple (for each \m{j\in\set{1,\dotsc,k}}). Then,
clearly, we have \m{\liste{s}{k}\in\Runary{\gamma_1,\liste{\gamma}{n}}},
thus
\m{x\defeq \composition{f}{\liste{s}{k}}\in\Runary{\gamma_1,\liste{\gamma}{n}}}
as \m{f} preserves \m{\Runary{\gamma_1,\liste{\gamma}{n}}}.
Since the first and second entry of~\m{x} are identical and the last~\m{n}
entries of~\m{x} coincide with \m{y\defeq \composition{f}{\liste{r}{k}}},
we obtain \m{y\in\Runary{\liste{\gamma}{n}}}.
Therefore, we have \m{f\in\Pol{\set{\Runary{\liste{\gamma}{n}}}}}.
\end{proof}

Since the preservation property remains unaffected by variable
permutations of relations, we have the following immediate corollary.

\begin{corollary}\label{cor:duplicating-permuting-parameters}
If \m{\rho,\rho'\in\DD} are such that~\m{\rho'} arises from~\m{\rho}
by a finite number of applications of the operations of
duplicating some parameters and
of rearranging the order of parameters,
then \m{\Pol{\set{\rho'}}\subs\Pol{\set{\rho}}}.
\end{corollary}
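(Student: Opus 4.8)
The plan is to argue by induction on the number~$m$ of elementary operations needed to pass from~$\rho$ to~$\rho'$, where an elementary operation is either a single-parameter duplication or a rearrangement of the parameter order; a simultaneous duplication of several parameters is just a finite sequence of single-parameter duplications, so this costs no generality. For $m=0$ there is nothing to show, since then $\rho'=\rho$ and hence $\Pol{\set{\rho'}}=\Pol{\set{\rho}}$. For the inductive step I would take $\rho'$ obtained from~$\rho$ by $m+1$ operations, let $\rho''$ be the relation reached after the first~$m$ of them, and note first that $\rho''\in\DD$: duplicating or permuting the entries of a tuple from $\bigcup_{n\in\Np}\Gamma^n$ yields another such tuple, and $\Runary{\cdot}$ applied to it is a member of~$\DD$ by definition. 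The induction hypothesis then gives $\Pol{\set{\rho''}}\subs\Pol{\set{\rho}}$, and it remains to verify $\Pol{\set{\rho'}}\subs\Pol{\set{\rho''}}$ for the single remaining operation turning~$\rho''$ into~$\rho'$.

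Here I distinguish two cases. If~$\rho'$ arises from~$\rho''$ by rearranging the parameter tuple $p\apply{\rho''}$, then, since the relation $f\preserves\sigma$ is unaffected by permuting the coordinates of~$\sigma$ (this is merely a renaming of the variables in the defining formula), in fact $\Pol{\set{\rho'}}=\Pol{\set{\rho''}}$. If instead~$\rho'$ arises by duplicating the $i$-th entry of $p\apply{\rho''}=\apply{\gamma_1,\dots,\gamma_n}$, I would apply to both~$\rho''$ and~$\rho'$ the coordinate permutation that moves position~$i$ to position~$1$: by the previous case this changes neither $\Pol{\set{\rho''}}$ nor $\Pol{\set{\rho'}}$, and it reduces the situation to duplicating the \emph{first} parameter, which is precisely Lemma~\ref{lem:duplicating-parameters} and yields $\Pol{\set{\rho'}}\subs\Pol{\set{\rho''}}$. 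Chaining this inclusion with $\Pol{\set{\rho''}}\subs\Pol{\set{\rho}}$ closes the induction.

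I do not expect any genuine obstacle: the statement is essentially a repackaging of Lemma~\ref{lem:duplicating-parameters} together with the permutation-invariance of~$\preserves$. The only things that require a little care are the bookkeeping that reduces a duplication at an arbitrary position to the first-position duplication handled by the lemma, and the remark that every intermediate relation remains inside~$\DD$, so that the inductive hypothesis actually applies to it.
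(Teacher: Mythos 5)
Your argument is correct and is exactly the route the paper intends: it states the corollary as an immediate consequence of Lemma~\ref{lem:duplicating-parameters} together with the invariance of the preservation property under permuting coordinates, which is precisely your induction made explicit. The only addition you make is the (harmless, and in fact unnecessary, since Lemma~\ref{lem:duplicating-parameters} holds for arbitrary unary relations) bookkeeping that intermediate relations stay in~\m{\DD}.
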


Observe that if \m{\rho'} arises from \m{\rho\in\DD} as described in the
previous corollary, then \m{\pt{\rho}\leq\pt{\rho'}} holds with
respect to the
pointwise order of tuples. The increases occur exactly in the places
where parameter relations have been duplicated. The only exception to this
is the case when \m{\rho} is a full power of~\m{\CarrierSet}, where we may
duplicate some (non\dash{}canonical) parameter relation
\m{\gamma_1\neq \CarrierSet}, but observe an increase of
\m{\pt{\rho}\apply{\CarrierSet}}. This issue does not occur if we only
duplicate canonical parameters as computed by \m{p\apply{\rho}}.
\par
Moreover, in this
process, no new basic unary relations can be introduced for~\m{\rho'}
that have not already been present as parameters of \m{\rho}. This means,
if \m{\pt{\rho}\apply{\gamma} = 0} for some \m{\gamma\in\Gamma}, the same
must be true for \m{\pt{\rho'}\apply{\gamma}}.
In other words, \m{\supp{\pt{\rho'}}\subs \supp{\pt{\rho}}}.
Combining these observations we conclude that \m{\pt{\rho}
\below\pt{\rho'}} must be satisfied when transmuting
\m{\rho\leadsto\rho'}.
\par
In fact the converse is also true, which is the reason for the following
crucial lemma, relating the order~\m{\below} on patterns of disjunctively
definable relations and the inclusion of their corresponding polymorphism
clones.
Note that, as Corollary~\ref{cor:duplicating-permuting-parameters}, this
lemma does not really depend on the finiteness of~\m{\Gamma}, it
only depends on the
fact that \m{\supp{\pt{\rho}}, \supp{\pt{\rho'}}\subs\Gamma} are finite.
\begin{lemma}\label{lem:downset-implies-Pol}
For \m{\rho,\rho'\in\DD} satisfying \m{\pt{\rho}\below\pt{\rho'}} we
have the dual inclusion \m{\Pol{\set{\rho'}}\subs\Pol{\set{\rho}}}.
\end{lemma}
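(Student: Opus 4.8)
The plan is to deduce from the hypothesis \m{\pt{\rho}\below\pt{\rho'}} that \m{\rho'} can be produced from~\m{\rho} by finitely many duplications of parameters followed by one rearrangement of parameters, so that the asserted inclusion drops out of Corollary~\ref{cor:duplicating-permuting-parameters}. Recall (from the discussion of the poset \m{\apply{\N^I,\below}}) that \m{\pt{\rho}\below\pt{\rho'}} means precisely \m{\pt{\rho}\leq\pt{\rho'}} pointwise \emph{and} \m{\supp{\pt{\rho}}=\supp{\pt{\rho'}}}. I would first dispose of the trivial case: if \m{\rho=\CarrierSet[n]} (where \m{n} is the arity of~\m{\rho}), then \m{\Pol{\set{\rho}}=\Ops} contains every operation and hence \m{\Pol{\set{\rho'}}\subs\Pol{\set{\rho}}}. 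So assume \m{\rho\neq\CarrierSet[n]}. Then, by the observation preceding Lemma~\ref{lem:parameter-reconstruction}, no entry of the canonical parameter tuple \m{p\apply{\rho}\in\Gamma^n} equals~\m{\CarrierSet}, hence \m{\pt{\rho}\apply{\CarrierSet}=0}, so \m{\CarrierSet\notin\supp{\pt{\rho}}=\supp{\pt{\rho'}}} and thus \m{\pt{\rho'}\apply{\CarrierSet}=0} as well; consequently \m{\rho'\neq\CarrierSet[m]} for \m{m} the arity of~\m{\rho'} (were \m{\rho'=\CarrierSet[m]} we would have \m{\CarrierSet\in\Gamma} and \m{\pt{\rho'}\apply{\CarrierSet}=m\geq1}). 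Hence \m{p\apply{\rho}\in\Gamma^n} and \m{p\apply{\rho'}\in\Gamma^m} are genuine parameter tuples over~\m{\Gamma}, every \m{\gamma\in\Gamma} occurs exactly \m{\pt{\rho}\apply{\gamma}} times in \m{p\apply{\rho}} and exactly \m{\pt{\rho'}\apply{\gamma}} times in \m{p\apply{\rho'}}, and (summing these occurrence counts over the finite supports) \m{n=\sum_{\gamma\in\Gamma}\pt{\rho}\apply{\gamma}} and \m{m=\sum_{\gamma\in\Gamma}\pt{\rho'}\apply{\gamma}}.

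Next I would start from \m{p\apply{\rho}} and build a tuple \m{\sigma} by duplications only: for each \m{\gamma\in\supp{\pt{\rho}}} there is at least one coordinate of \m{p\apply{\rho}} carrying~\m{\gamma} (because \m{\pt{\rho}\apply{\gamma}\geq1}), and I duplicate such a coordinate \m{\pt{\rho'}\apply{\gamma}-\pt{\rho}\apply{\gamma}\geq0} times; for \m{\gamma\notin\supp{\pt{\rho}}=\supp{\pt{\rho'}}} nothing happens. Finitely many duplications suffice since the patterns have finite support, and the resulting tuple \m{\sigma} carries each \m{\gamma\in\Gamma} exactly \m{\pt{\rho'}\apply{\gamma}} times, so \m{\sigma\in\Gamma^m}; moreover \m{\sigma} introduces no value not already occurring in \m{p\apply{\rho}}, in particular no coordinate of \m{\sigma} equals~\m{\CarrierSet}, so \m{\Runary{\sigma}\neq\CarrierSet[m]}. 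By Lemma~\ref{lem:parameter-reconstruction} this gives \m{p\apply{\Runary{\sigma}}=\sigma} and hence \m{\pt{\Runary{\sigma}}=\pt{\rho'}}. Since \m{\Runary{\sigma}} and \m{\rho'} are both arity-\m{m} relations in~\m{\DD} distinct from~\m{\CarrierSet[m]} with equal pattern, their canonical parameter tuples \m{\sigma=p\apply{\Runary{\sigma}}} and \m{p\apply{\rho'}} carry the same multiset of entries, so \m{p\apply{\rho'}} is a rearrangement of~\m{\sigma}. Applying the construction \m{\Runary{\cdot}} and using \m{\Runary{p\apply{\tau}}=\tau} (the one-sided inverse property stated after Lemma~\ref{lem:parameter-reconstruction}), we see that \m{\rho'=\Runary{p\apply{\rho'}}} arises from \m{\Runary{\sigma}} by rearranging parameters, while \m{\Runary{\sigma}} arises from \m{\rho=\Runary{p\apply{\rho}}} by duplicating parameters. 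Thus \m{\rho'} is obtained from~\m{\rho} by finitely many parameter duplications and one rearrangement, and Corollary~\ref{cor:duplicating-permuting-parameters} yields \m{\Pol{\set{\rho'}}\subs\Pol{\set{\rho}}}.

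The part I expect to require the most care is the bookkeeping around the full-power exception hard-wired into the parameter reconstruction map~\m{p}: one must make sure that, after the duplications, the relation \m{\Runary{\sigma}} is still \emph{not} a full power of~\m{\CarrierSet}, so that Lemma~\ref{lem:parameter-reconstruction} genuinely returns~\m{\sigma}, and dually that \m{\rho'} itself is non\dash{}trivial, so that equality of patterns really forces equality of the underlying relations up to coordinate permutation. Both are secured by the reduction to \m{\rho\neq\CarrierSet[n]} together with the fact that the support condition built into~\m{\below} transports the property ``no canonical parameter equals~\m{\CarrierSet}'' from~\m{\rho} to~\m{\rho'} and to~\m{\sigma}. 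Everything else is routine multiset bookkeeping on parameter tuples.
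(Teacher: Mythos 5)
Your proof is correct and follows essentially the same route as the paper's: deduce from \m{\pt{\rho}\below\pt{\rho'}} that \m{\rho'} arises from \m{\rho} by finitely many parameter duplications and a permutation, then invoke Corollary~\ref{cor:duplicating-permuting-parameters}. Your explicit treatment of the full-power edge case (disposing of \m{\rho=\CarrierSet[n]} first and checking that \m{\Runary{\sigma}} and \m{\rho'} remain proper so that \m{p} really inverts \m{\CRelOp}) is more careful than the paper's terse argument, which handles this point only in the informal discussion preceding the lemma.
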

\begin{proof}
If \m{\rho,\rho'\in\DD} are such that \m{\pt{\rho}\below\pt{\rho'}},
then by verifying that the assumptions of
Corollary~\ref{cor:duplicating-permuting-parameters} are fulfilled, we
see that \m{\Pol{\set{\rho'}}\subs\Pol{\set{\rho}}}.
In more detail, from \m{\pt{\rho}\below\pt{\rho'}} we get that
\m{\supp{\pt{\rho}}=\supp{\pt{\rho'}}}, so in the parameter tuples
\m{p\apply{\rho}} and \m{p\apply{\rho'}} the same relations
from~\m{\Gamma} occur. Because \m{\pt{\rho}\leq\pt{\rho'}}, those that
are actually present, occur possibly a few more times in the pattern of
\m{\rho'} than in that of~\m{\rho} and are perhaps associated with
different coordinates in~\m{\rho} and~\m{\rho'}. However this means
exactly that \m{\rho'} can be obtained from \m{\rho} by duplicating and
permuting parameters in a finite number of steps (since
\m{\supp{\pt{\rho}}= \supp{\pt{\rho'}}} is finite).
\end{proof}

Now, finally, in order to bound the number of polymorphism clones given by
sets~\m{Q} of relations that are disjunctively definable from~\m{\Gamma},
we associate with each such set a downset of
\m{\apply{\N^\Gamma,\below}}, namely the downset generated by the
associated patterns. More formally, we define the encoding
\[\function{I}{\powerset{\DD}}{\DS[{\N^{\Gamma},\below}]}{%
   Q}{\downset[{\apply{\N^\Gamma,\below}}]{\ptOp\fapply{Q}}
  =\bigcup_{\rho\in Q}\downset[{\apply{\N^\Gamma,\below}}]{\pt{\rho}}.}\]

With the help of Lemma~\ref{lem:downset-implies-Pol} we can now prove the
following relationship:
\begin{proposition}\label{prop:encoding-implies-Pol}
For \m{Q_1,Q_2\subs\DD} satisfying \m{I\apply{Q_1}\subs I\apply{Q_2}} we
have the inclusion \m{\Pol{Q_2}\subs\Pol{Q_1}}.
\end{proposition}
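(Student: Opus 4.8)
The plan is to unfold the definition of the encoding map~\m{I} and then apply Lemma~\ref{lem:downset-implies-Pol} relation by relation. First I would note that the hypothesis \m{I\apply{Q_1}\subs I\apply{Q_2}} forces every pattern coming from~\m{Q_1} to be dominated by a pattern coming from~\m{Q_2}. Indeed, for any \m{\rho\in Q_1} we have \m{\pt{\rho}\in\downset[{\apply{\N^\Gamma,\below}}]{\pt{\rho}}\subs I\apply{Q_1}\subs I\apply{Q_2}} by reflexivity of~\m{\below}, and since \m{I\apply{Q_2}=\bigcup_{\sigma\in Q_2}\downset[{\apply{\N^\Gamma,\below}}]{\pt{\sigma}}}, there is some witness \m{\rho'\in Q_2} with \m{\pt{\rho}\in\downset[{\apply{\N^\Gamma,\below}}]{\pt{\rho'}}}, i.e., \m{\pt{\rho}\below\pt{\rho'}}.

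Next I would feed this into Lemma~\ref{lem:downset-implies-Pol}, obtaining \m{\Pol{\set{\rho'}}\subs\Pol{\set{\rho}}}. On the other hand, since \m{\rho'\in Q_2}, the antitone behaviour of polymorphism assignment (a larger set of constraints has fewer polymorphisms) gives \m{\Pol{Q_2}\subs\Pol{\set{\rho'}}}. Chaining the two inclusions yields \m{\Pol{Q_2}\subs\Pol{\set{\rho}}} for every single \m{\rho\in Q_1}.

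Finally I would intersect over all \m{\rho\in Q_1}: because \m{\Pol{Q_1}=\bigcap_{\rho\in Q_1}\Pol{\set{\rho}}}, the previous step immediately delivers \m{\Pol{Q_2}\subs\Pol{Q_1}}. The degenerate case \m{Q_1=\emptyset} is harmless, as then \m{\Pol{Q_1}=\Ops} already contains everything. I do not expect a genuine obstacle here: all the substance already sits in Lemma~\ref{lem:downset-implies-Pol}, whose proof absorbed the combinatorics of duplicating and permuting unary parameters (via Lemma~\ref{lem:duplicating-parameters} and Corollary~\ref{cor:duplicating-permuting-parameters}); the present proposition is just the bookkeeping that transfers that single\dash{}relation statement across arbitrary, possibly infinite, sets of relations through the downset encoding.
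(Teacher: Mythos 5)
Your proposal is correct and follows essentially the same route as the paper's proof: unwind \m{I\apply{Q_1}\subs I\apply{Q_2}} to find, for each \m{\rho\in Q_1}, a witness \m{\rho'\in Q_2} with \m{\pt{\rho}\below\pt{\rho'}}, then invoke Lemma~\ref{lem:downset-implies-Pol} and the antitonicity of \m{\PolOp}. The only cosmetic difference is that the paper fixes an arbitrary \m{f\in\Pol{Q_2}} instead of intersecting over \m{\rho\in Q_1} at the end, which is the same argument.
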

\begin{proof}
For \m{Q_1,Q_2\subs\DD} assume \m{I\apply{Q_1}\subs I\apply{Q_2}}. Let
\m{f\in\Pol{Q_2}} and a relation \m{\rho\in Q_1} be chosen arbitrarily.
Abbreviating \m{\order \defeq \apply{\N^\Gamma,\below}}, we have
\[\pt{\rho}\in
\downset{\ptOp\fapply{Q_1}} = I\apply{Q_1} \subs I\apply{Q_2}
=\downset{\ptOp\fapply{Q_2}}
=\bigcup_{\rho'\in Q_2}\downset{\pt{\rho'}}.
\]
Hence, there is some \m{\rho'\in Q_2} such that
\m{\pt{\rho}\in\downset{\pt{\rho'}}}, \ie, \m{\pt{\rho}\below\pt{\rho'}}.
Now Lemma~\ref{lem:downset-implies-Pol} implies
\m{f\in \Pol{Q_2}\subs\Pol{\set{\rho'}}\subs\Pol{\set{\rho}}}.
\end{proof}

More important for our target is actually the image
\m{\lset{\Pol{Q}}{Q\subs\DD}} of the map
\m{\functionhead{\PolOp}{\powerset{\DD}}{\lset{\Pol{Q}}{Q\subs\DD}}},
whose cardinality is linked to that of the factor set
\m{\factorBy{\powerset{\DD}}{\ker\PolOp}} by the kernel. As a
straightforward corollary of Proposition~\ref{prop:encoding-implies-Pol}
the latter is closely related to the kernel of~\m{I}.

\begin{corollary}\label{cor:kerI-subs-kerPol}
On\/ \m{\powerset{\DD}} we have the following inclusion between
equivalences:
\m{\ker I =
\lset{\apply{Q_1,Q_2}\in \powerset{\DD}^2}{I\apply{Q_1}=I\apply{Q_2}}
\subs\ker \PolOp}.
\end{corollary}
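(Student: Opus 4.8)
The plan is to read this off directly from Proposition~\ref{prop:encoding-implies-Pol} by applying it in both directions. Concretely, I would start from an arbitrary pair \m{\apply{Q_1,Q_2}\in\powerset{\DD}^2} belonging to \m{\ker I}, that is, satisfying \m{I\apply{Q_1}=I\apply{Q_2}}. From this equality of downsets of \m{\apply{\N^\Gamma,\below}} I extract the two inclusions \m{I\apply{Q_1}\subs I\apply{Q_2}} and \m{I\apply{Q_2}\subs I\apply{Q_1}}.

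Next I would feed each of these inclusions into Proposition~\ref{prop:encoding-implies-Pol}. The first inclusion yields \m{\Pol{Q_2}\subs\Pol{Q_1}}, and the second, by the same proposition with the roles of \m{Q_1} and \m{Q_2} interchanged, yields \m{\Pol{Q_1}\subs\Pol{Q_2}}. Putting the two together gives \m{\Pol{Q_1}=\Pol{Q_2}}, which is exactly the statement that \m{\apply{Q_1,Q_2}\in\ker\PolOp}. Since \m{\apply{Q_1,Q_2}} was an arbitrary element of \m{\ker I}, this establishes \m{\ker I\subs\ker\PolOp}.

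As for the difficulty: there really is no obstacle here. All the substance has already been packed into Proposition~\ref{prop:encoding-implies-Pol} (and, before it, into Lemma~\ref{lem:downset-implies-Pol} and Corollary~\ref{cor:duplicating-permuting-parameters}); the corollary is merely the observation that an order-reversing consequence of an inclusion of encodings becomes an equality once the inclusion of encodings is upgraded to an equality. The only thing to be mildly careful about is that \m{\ker I} and \m{\ker\PolOp} are being regarded as equivalence relations (subsets of \m{\powerset{\DD}^2}), so the claimed containment is containment of relations, matching precisely the elementwise argument above. I would therefore keep the write-up to a couple of sentences, merely invoking Proposition~\ref{prop:encoding-implies-Pol} twice.
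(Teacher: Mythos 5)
Your proof is correct and coincides with the paper's own reasoning: the paper states this as an immediate consequence of Proposition~\ref{prop:encoding-implies-Pol} (indeed it gives no separate proof), and applying that proposition to both inclusions \m{I\apply{Q_1}\subs I\apply{Q_2}} and \m{I\apply{Q_2}\subs I\apply{Q_1}} is exactly the intended argument.
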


This result allows us to establish an upper bound on the number of clones
determined by relations that are disjunctively definable over~\m{\Gamma}.
\begin{corollary}\label{cor:upper-bound}
We have\/
\m{\abs{\im{\PolOp}}\leq \abs{\im{I}}\leq\abs{\DS[{\N^\Gamma,\below}]}}.
\end{corollary}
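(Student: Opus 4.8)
\emph{Proof proposal.} The plan is to establish the two inequalities separately; each is a purely formal consequence of the material already assembled, so the whole argument is short.

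First I would dispatch the right\dash{}hand inequality $\abs{\im{I}}\leq\abs{\DS[{\N^\Gamma,\below}]}$, for which essentially nothing is to be proved: by the very definition of the encoding, $I\apply{Q}=\downset[{\apply{\N^\Gamma,\below}}]{\ptOp\fapply{Q}}$ is a downset of $\apply{\N^\Gamma,\below}$ for every $Q\subs\DD$, so $\im{I}\subs\DS[{\N^\Gamma,\below}]$, and a subset never has larger cardinality than the set containing it.

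For the left\dash{}hand inequality $\abs{\im{\PolOp}}\leq\abs{\im{I}}$ I would invoke Corollary~\ref{cor:kerI-subs-kerPol}, which yields $\ker I\subs\ker\PolOp$ as equivalences on $\powerset{\DD}$. This inclusion says precisely that the partition of $\powerset{\DD}$ induced by $I$ refines the one induced by $\PolOp$; hence the assignment $I\apply{Q}\mapsto\Pol{Q}$ is well defined, since $I\apply{Q_1}=I\apply{Q_2}$ forces $\apply{Q_1,Q_2}\in\ker I\subs\ker\PolOp$ and thus $\Pol{Q_1}=\Pol{Q_2}$. Through the canonical bijections $\im{I}\cong\factorBy{\powerset{\DD}}{\ker I}$ and $\im{\PolOp}\cong\factorBy{\powerset{\DD}}{\ker\PolOp}$ this assignment becomes a surjection of $\im{I}$ onto $\im{\PolOp}$, and a surjection cannot increase cardinality. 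Chaining the two estimates gives the assertion.

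I do not expect a genuine obstacle here; the one point deserving a careful word is the passage from the kernel inclusion to the cardinality comparison, i.e.\ making explicit that $\ker I\subs\ker\PolOp$ produces a surjection $\im{I}\twoheadrightarrow\im{\PolOp}$ (and not the reverse), so that the inequality indeed points in the claimed direction. Downstream, this corollary will be combined with a countability bound on $\abs{\DS[{\N^\Gamma,\below}]}$ to conclude that only countably many polymorphism clones arise from crosses over a finite non\dash{}trivial language~$\Gamma$.
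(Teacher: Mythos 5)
Your argument is correct and coincides with the paper's own proof: both derive the left inequality from the kernel inclusion $\ker I\subs\ker\PolOp$ of Corollary~\ref{cor:kerI-subs-kerPol} via the induced surjection $\factorBy{\powerset{\DD}}{\ker I}\twoheadrightarrow\factorBy{\powerset{\DD}}{\ker\PolOp}$ (equivalently $\im I\twoheadrightarrow\im\PolOp$), and the right inequality from $\im I\subs\DS[{\N^\Gamma,\below}]$. Your explicit remark on the direction of the surjection is a sound precaution but introduces nothing beyond the paper's reasoning.
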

\begin{proof}
Since \m{\ker{I}\subs \ker\PolOp}, there is a canonical
well\dash{}defined surjection from \m{\factorBy{\powerset{\DD}}{\ker{I}}}
onto \m{\factorBy{\powerset{\DD}}{\ker{\PolOp}}}, so
\[
\im\PolOp \cong
\factorBy{\powerset{\DD}}{\ker{\PolOp}}
\twoheadleftarrow \factorBy{\powerset{\DD}}{\ker{I}} \cong \im I
\subs \DS[{\N^{\Gamma},\below}],\]
telling us that the cardinality of
\m{\abs{\im\PolOp} =\abs{\factorBy{\powerset{\DD}}{\ker{\PolOp}}}}
is bounded above by that of
\m{\abs{\factorBy{\powerset{\DD}}{\ker{I}}} = \abs{\im I}
   \leq \abs{\DS[{\N^{\Gamma},\below}]}}.
\end{proof}

An alternative proof of the previous fact can be obtained by noting that
the following map~\m{\psi}, representing disjunctively definable clones
as downsets of patterns, is injective. Moreover, it even embeds the whole
ordered structure of such clones.
\begin{proposition}\label{prop:upper-bound-by-injection}
The map
\[\function{\psi}{\apply{\lset{\Pol{Q}}{Q\subs\DD},\supseteq}}{\apply{\DS[{\N^\Gamma,\below}],\subs}}{F=\Pol{Q}}{\ptOp\fapply{F'},}
\]
where \m{F^{\inV}=\lset{\rho\in\DD}{\forall f\in F\colon f\preserves\rho}},
is a well\dash{}defined order embedding, and it makes the following
diagram commute:
\begin{center}%
\begin{tikzpicture}[set/.style={rectangle,fill=none,draw=none},x=9em]
\node[set] (pdd) at (0,1) {$\powerset{\DD}$};
\node[set] (ds)  at (1,1) {$\DS[{\N^\Gamma,\below}]$};
\node[set] (ip1) at (0,0) {$\im{\PolOp}$};
\node[set] (ip2) at (1,0) {$\im{\PolOp}$};
\draw[->,>=stealth] (pdd) edge node[above]{$I$} (ds)
                          edge node[auto]{$\PolOp$} (ip1)
                    (ds)  edge node[above]{$\phi$} (ip1)
                          edge[<-] node[auto]{$\psi$} (ip2)
                    (ip2) edge node[below] {$\id_{\im{\PolOp}}$} (ip1);
\end{tikzpicture}%
\end{center}
where~$\phi$ is any factor map (cf.~Corollary~\ref{cor:kerI-subs-kerPol}) satisfying
$\phi\apply{I\apply{Q}}= \Pol{Q}$ on the image of~$I$ and being defined
as, \eg, $\phi\apply{U}=\Pol{\emptyset}$ anywhere else.
\end{proposition}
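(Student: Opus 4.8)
The plan is to verify the three tasks bundled into the statement: (i) $\psi$ is well-defined, i.e.\ $\ptOp\fapply{F'}$ really is a downset of $\apply{\N^\Gamma,\below}$; (ii) $\psi$ is an order embedding with respect to the displayed orders; and (iii) the diagram commutes, i.e.\ $\psi\apply{\Pol Q}= I\apply{Q}$ on $\im\PolOp$ (reading off the two triangles), which simultaneously re-proves that $\psi$ is injective and yields the bound $\abs{\im\PolOp}\le\abs{\DS[{\N^\Gamma,\below}]}$ as an alternative to Corollary~\ref{cor:upper-bound}. The central technical fact I would isolate first is the equality
\[
\ptOp\fapply{F^{\inV}} = \downset[{\apply{\N^\Gamma,\below}}]{\ptOp\fapply Q} = I\apply Q
\qquad\text{for } F=\Pol Q,\ Q\subs\DD,
\]
from which all three claims follow almost formally. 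Once this is in hand, (i) is immediate because the right-hand side is by construction a downset; (iii) is exactly the definition of $I$; and injectivity of $\psi$ follows since $\psi\apply F = I\apply Q$ together with $\phi\apply{I\apply Q}=\Pol Q=F$ exhibits $\phi$ as a left inverse of $\psi$.

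To prove the boxed equality I would argue by two inclusions. For ``$\supseteq$'': if $\pi\in\downset{\ptOp\fapply Q}$ then $\pi\below\pt\rho$ for some $\rho\in Q$; pick any $\rho'\in\DD$ with $\pt{\rho'}=\pi$ (such a $\rho'$ exists because $\supp\pi\subs\supp{\pt\rho}\subs\Gamma$ is finite, so one may realize $\pi$ as the pattern of a disjunction over those finitely many parameters). By Lemma~\ref{lem:downset-implies-Pol}, $\pt{\rho'}\below\pt\rho$ gives $\Pol{\set\rho}\subs\Pol{\set{\rho'}}$; since $F=\Pol Q\subs\Pol{\set\rho}\subs\Pol{\set{\rho'}}$, every $f\in F$ preserves $\rho'$, hence $\rho'\in F^{\inV}$ and $\pi=\pt{\rho'}\in\ptOp\fapply{F^{\inV}}$. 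For ``$\subseteq$'': if $\sigma\in F^{\inV}$, I must show $\pt\sigma\below\pt\rho$ for some $\rho\in Q$. Here one invokes the Galois machinery of the preservation relation: $F=\Pol Q$ is a polymorphism clone, so $F^{\inV}$ is (relative to $\DD$) the closure of $Q$ under the relational operations available inside $\DD$ — conjunction-based primitive positive constructions restricted so as to stay a cross — and $\sigma\in F^{\inV}$ forces $\sigma$ to be definable from finitely many members of $Q$. The pattern argument already assembled in the run-up to Lemma~\ref{lem:downset-implies-Pol} (duplication/permutation of parameters strictly controls $\below$) then shows $\pt\sigma\below\pt\rho$ for one such $\rho\in Q$, i.e.\ $\pt\sigma\in\downset{\ptOp\fapply Q}$.

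Finally, the order-embedding claim: monotonicity is clear, since $F_1\supseteq F_2$ implies $F_1^{\inV}\subs F_2^{\inV}$, hence $\psi\apply{F_1}=\ptOp\fapply{F_1'}\subs\ptOp\fapply{F_2'}=\psi\apply{F_2}$. For order-reflection, suppose $\psi\apply{F_1}\subs\psi\apply{F_2}$ with $F_i=\Pol{Q_i}$; by the boxed equality this says $I\apply{Q_1}\subs I\apply{Q_2}$, and Proposition~\ref{prop:encoding-implies-Pol} yields $F_2=\Pol{Q_2}\subs\Pol{Q_1}=F_1$, i.e.\ $F_1\supseteq F_2$ in the source order. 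Commutativity of the diagram is then a diagram chase: the left triangle is the definition of $\phi$ on $\im I$ combined with $I=\psi\circ\PolOp$, and the right triangle is $\phi\circ\psi=\id_{\im\PolOp}$, which is $\phi\apply{I\apply Q}=\Pol Q$. I expect the only real obstacle to be the ``$\subseteq$'' direction of the boxed equality, i.e.\ verifying that membership of $\sigma$ in the relational Galois closure $F^{\inV}$ cannot produce a pattern outside $\downset{\ptOp\fapply Q}$; this is the point where one must be careful that primitive-positive definitions within $\DD$ genuinely reduce to the parameter duplication/permutation moves already analyzed, rather than introducing genuinely new patterns. Everything else is bookkeeping with the earlier lemmas.
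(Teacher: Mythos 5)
Your plan hinges on the central ``boxed equality'' \m{\ptOp\fapply{F^{\inV}} = I\apply{Q}} for \m{F=\Pol{Q}}, and this equality is false in general; consequently the ``\m{\subseteq}'' direction, which you yourself flag as the only real obstacle, cannot be closed. Concretely, if \m{\CarrierSet\in\Gamma} (nothing in the proposition excludes this), then every full power \m{\CarrierSet^n=\Runary{\CarrierSet,\dotsc,\CarrierSet}} lies in \m{\DD} and is preserved by every operation, hence belongs to \m{F^{\inV}} for \emph{every} \m{F}; taking \m{Q=\emptyset} gives \m{I\apply{Q}=\emptyset} while \m{\ptOp\fapply{F^{\inV}}} is infinite. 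More structurally, your equality would imply \m{\ker\PolOp\subs\ker I}, the reverse of Corollary~\ref{cor:kerI-subs-kerPol}, which does not hold. The justification you sketch for ``\m{\subseteq}'' --- that any \m{\sigma\in F^{\inV}} must arise from members of \m{Q} by the duplication/permutation moves --- is exactly the unproved and untrue step: \m{F^{\inV}} is the full relational Galois closure intersected with \m{\DD} and may contain crosses whose patterns lie below no pattern of any member of \m{Q}. Note also that the diagram does not assert \m{I=\psi\circ\PolOp} (the arrows \m{\PolOp} and \m{\psi} attach to different copies of \m{\im{\PolOp}}); the only commutativity conditions are \m{\phi\circ I=\PolOp}, which holds by the definition of \m{\phi}, and \m{\phi\circ\psi=\id_{\im{\PolOp}}}, so you are attempting to prove something strictly stronger than required.

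The identity that is actually true, and that the paper's proof uses, is \m{\psi\apply{F}=\ptOp\fapply{F^{\inV}}=I\apply{F^{\inV}}}; one works with \m{F^{\inV}} in place of \m{Q} throughout, exploiting the Galois identities \m{F^{\inV\poL\inV}=F^{\inV}} and \m{F^{\inV\poL}=F}. Your ``\m{\supseteq}'' argument, run verbatim with \m{F^{\inV}} instead of \m{Q} (using \m{F\subs\Pol{\set{\rho_1}}} for \m{\rho_1\in F^{\inV}}, and taking care when realizing a pattern \m{x\below\pt{\rho_1}} as \m{\pt{\rho_2}} that the constructed cross has the intended canonical parameters, e.g.\ when \m{\CarrierSet} occurs among them), is essentially the paper's well\dash{}definedness proof. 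Commutativity then reads \m{\phi\apply{\psi\apply{F}}=\phi\apply{I\apply{F^{\inV}}}=\Pol{F^{\inV}}=F}, and your order\dash{}reflection step goes through once you replace \m{I\apply{Q_i}} by \m{I\apply{F_i^{\inV}}} and apply Proposition~\ref{prop:encoding-implies-Pol} to those sets (a perfectly viable alternative to the paper's element\dash{}wise argument via Lemma~\ref{lem:downset-implies-Pol}). So the skeleton is salvageable, but as written the argument rests on a false central lemma and an irreparable ``\m{\subseteq}'' step.
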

\begin{proof}
In this proof, let us abbreviate
\m{Q^{\poL}\defeq\Pol{Q}} for any set~\m{Q\subs\DD}.
In order to demonstrate that~\m{\psi} is well\dash{}defined, we need to
show that \m{\ptOp\fapply{F^{\inV}}\subs\N^\Gamma} is a downset with
respect to~\m{\mathord{\below}} given \m{F=Q^{\poL}} for some
\m{Q\subs\DD}. So let \m{\rho_1\in F^{\inV}} be an \nbdd{m}ary relation and \m{x\in\N^\Gamma} with
\m{x\below\pt{\rho_1}}. Thus, \m{x\leq \pt{\rho_1}} and
\m{\supp{x}=\supp{\pt{\rho_1}} \in\powersetfin{\Gamma}}.
Putting \m{n\defeq \sum_{\gamma\in\supp{x}} x(\gamma)}, we construct a
parameter tuple \m{z\in\Gamma^n} as follows: for each
\m{\gamma\in\supp{x}}, we add \m{x(\gamma)} copies of \m{\gamma}
to~\m{z}, in any selected order. Then we have
\m{\rho_2\defeq \Runary{z}\in\DD}. If \m{\rho_1=\CarrierSet[m]}, then
\m{\CarrierSet\in\Gamma}, \m{\pt{\rho_1}\apply{\CarrierSet}=m} and
\m{\pt{\rho_1}\apply{\gamma}=0} for all
\m{\gamma\in\Gamma\setminus\set{\CarrierSet}}. Now
\m{\supp{x}=\supp{\pt{\rho_1}} = \set{\CarrierSet}}, so
\m{z = \apply{\CarrierSet,\dotsc,\CarrierSet}}, where \m{\CarrierSet}
occurs exactly \nbdd{x\apply{\CarrierSet}}many times.
Then \m{p\apply{\Runary{z}}=z}. On the other hand, if
\m{\rho_1\subsetneq\CarrierSet[m]}, then
\m{\CarrierSet\notin\supp{\pt{\rho_1}}}, so~\m{\CarrierSet} does not
occur in the image of~\m{z}. Therefore,
\m{p\apply{\Runary{z}}=z}. In both cases we have
\m{\pt{\rho_2}=x} since \m{p\apply{\rho_2}=p\apply{\Runary{z}}=z}
and~\m{z} contains the appropriate number of relations~\m{\gamma} for each
\m{\gamma\in\supp{x}}. Hence, our assumption on~\m{x} yields
\m{\pt{\rho_2}=x\below\pt{\rho_1}}, and so
Lemma~\ref{lem:downset-implies-Pol} implies
\m{\set{\rho_1}^{\poL}\subs\set{\rho_2}^{\poL}}. Consequently, we get
\m{\rho_2\in \set{\rho_2}^{\poL\inV}\subs\set{\rho_1}^{\poL\inV}\subs
F^{\inV\poL\inV} = F^{\inV}}, and thus
\m{x=\pt{\rho_2}\in\ptOp\fapply{F^{\inV}}=\psi\apply{F}} as desired.
\par
It is obvious that~\m{\psi} is order preserving. To prove that it is
order reflecting, consider \m{F_1=Q_1^{\poL}} and \m{F_2=Q_2^{\poL}} for
\m{Q_1,Q_2\subs\DD} such that \m{\psi\apply{F_1}\subs\psi\apply{F_2}} and
any \m{\rho_1\in F_1^{\inV}}. Then we have
\m{\pt{\rho_1}\in\psi\apply{F_1}\subs\psi\apply{F_2}=\ptOp\fapply{F_2^{\inV}}},
so there is some \m{\rho_2\in F_2^{\inV}} such that
\m{\pt{\rho_1}=\pt{\rho_2}}.
Applying Lemma~\ref{lem:downset-implies-Pol} twice, we obtain
\m{\set{\rho_1}^{\poL} = \set{\rho_2}^{\poL}} and hence
\m{\rho_1\in\set{\rho_1}^{\poL\inV} =\set{\rho_2}^{\poL\inV}\subs
F_2^{\inV\poL\inV} = F_2^{\inV}}.
Thus \m{F_1^{\inV}\subs F_2^{\inV}} and
\m{F_1 = F_1^{\inV\poL}\supseteq F_2^{\inV\poL} = F_2}.
\par
Order reflection clearly implies that \m{\psi} is injective. It needs
to be shown that \m{\phi\apply{\psi\apply{F}} = F} for every
\m{F=Q^{\poL}} where \m{Q\subs\DD}.
As \m{\psi\apply{F}\in\DS[{\N^\Gamma,\below}]}, we have
\m{\psi\apply{F} = \downset[{\apply{\N^\Gamma,\below}}]{\psi\apply{F}}
=\downset[{\apply{\N^\Gamma,\below}}]{\ptOp\fapply{F^{\inV}}}
=I\apply{F^{\inV}}}, whence it follows that
\m{\phi\apply{\psi\apply{F}}=\phi\apply{I\apply{F^{\inV}}}=F^{\inV\poL}=F}.
\end{proof}

\section{Results}\label{sec:results}
By Corollary~\ref{cor:upper-bound} from the previous section we have
transferred the task of counting the number of clones given by relations
that are disjunctively definable over a fixed (commonly finite) set of unary
predicates~\m{\Gamma} to the investigation of the downsets of the poset
\m{\apply{\N^\Gamma,\below}}. This is a special case of the poset
\m{\apply{\N^I,\below}} from Section~\ref{sect:prelim}, where we now have
the additional assumption that~\m{I} is finite.
Concerning downsets of this poset, we first verify the following
general facts.
\begin{lemma}\label{lem:poset-on-N-to-I}
Let\/ \m{\ordset{\genericOrder}{\N^I}{\below}} be the poset defined in
Section~\ref{sect:prelim}.
\begin{enumerate}[(a)]
\item\label{item:YF-downset}
      The set \m{\YF\defeq \lset{x\in\N^I}{\supp{x} = I}} is a
      downset of\/~\m{\order}, and the induced subposet\/
      \m{\ordset{\YF}{\YF}{\restrordrel[{\below}]{\YF}}} satisfies
      \m{\order[{\YF}] \cong \apply{\apply{\Np}^I, \leq }
                       \cong \apply{\N^I,\leq}}.
\item\label{item:YsubsJ-downset}
      For every \m{J\subs I} the set
      \m{Y_{\subs J}\defeq \lset{x\in\N^I}{\supp{x}\subs J}} is a downset
      of\/~\m{\order}; the induced subposet\/
      \m{\ordset{\Ysubs{J}}{\Ysubs{J}}{\restrordrel[{\below}]{\Ysubs{J}}}}
      is isomorphic to \m{\apply{\N^J,\below}}.
\item\label{item:N-to-I-downset-decomposition}
      We have\/ \m{\N^I=\YF\cup\bigcup_{i\in I} \Ysubs{I\setminus\set{i}}}.
\end{enumerate}
\end{lemma}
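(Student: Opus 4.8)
The plan is to dispatch the three items one after another, relying throughout on the reformulation recorded in Section~\ref{sect:prelim} that \m{x\below y} holds in~\m{\order} exactly when \m{x\leq y} pointwise \emph{and} \m{\supp{x}=\supp{y}}.

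For item~\ref{item:YF-downset}, I would first check that \m{\YF} is a downset: if \m{y\in\YF}, so \m{\supp{y}=I}, and \m{x\below y}, then \m{\supp{x}=\supp{y}=I}, whence \m{x\in\YF}. Since every element of \m{\YF} has full support, the support clause in the definition of~\m{\below} imposes nothing on \m{\YF}, so the restriction of~\m{\below} to \m{\YF} coincides with the pointwise order \m{\leq} restricted to \m{\YF}. As a subset of \m{\N^I} one has \m{\YF=\apply{\Np}^I}, giving \m{\order[{\YF}]\cong\apply{\apply{\Np}^I,\leq}}, and applying the order isomorphism \m{\apply{\Np,\leq}\cong\apply{\N,\leq}}, \m{n\mapsto n-1}, in each coordinate yields \m{\apply{\apply{\Np}^I,\leq}\cong\apply{\N^I,\leq}}.

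For item~\ref{item:YsubsJ-downset}, the downset property is again immediate, since \m{x\below y} and \m{y\in\Ysubs{J}} force \m{\supp{x}=\supp{y}\subs J}, hence \m{x\in\Ysubs{J}}. For the claimed isomorphism I would use the restriction map sending each \m{x\in\Ysubs{J}}, which vanishes on \m{I\setminus J}, to the tuple \m{\apply{x\apply{j}}_{j\in J}\in\N^J}. It is a bijection because the coordinates outside~\m{J} carry no information, and since for \m{x,y\in\Ysubs{J}} the comparison \m{x\leq y} in \m{\N^I} is equivalent to the corresponding comparison of the restrictions in \m{\N^J}, while \m{\supp{x}=\supp{y}} transfers likewise, it is an isomorphism onto \m{\apply{\N^J,\below}} (with \m{J=\emptyset} giving the trivial one\dash{}element poset on both sides).

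For item~\ref{item:N-to-I-downset-decomposition}, the inclusion \m{\supseteq} is clear since each set on the right sits inside \m{\N^I}. Conversely, for \m{x\in\N^I} either \m{\supp{x}=I}, so \m{x\in\YF}, or there is some \m{i\in I\setminus\supp{x}}, and then \m{\supp{x}\subs I\setminus\set{i}}, i.e.\ \m{x\in\Ysubs{I\setminus\set{i}}}. None of the three items is deep; the only point meriting a little care is verifying in \ref{item:YF-downset} and \ref{item:YsubsJ-downset} that the restriction of~\m{\below} to the smaller carrier set really collapses to the advertised order on a power of~\m{\N}, and this rests entirely on the support\dash{}based description of~\m{\below}. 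I anticipate no genuine obstacle.
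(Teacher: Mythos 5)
Your proposal is correct and follows essentially the same route as the paper's proof: the same support-based verification of the downset property in (a) and (b), the identification \m{\YF=\apply{\Np}^I} with the coordinatewise shift \m{n\mapsto n-1}, the restriction/zero-extension bijection between \m{\Ysubs{J}} and \m{\N^J}, and the same case split for (c). No gaps.
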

\begin{proof}
\begin{enumerate}[(a)]
\item If \m{x\in\YF} and \m{y\in\N^I} satisfies \m{y\below x}, then
      \m{I=\supp{x}=\supp{y}}. Hence, \m{y\in\YF}, and so
      \m{\YF\in\DS}. Moreover, for \m{x\in\N^I}, we have \m{x\in\YF} if and
      only if \m{x\apply{i}\neq 0} for all \m{i\in I}, \ie\ if
      \m{x\in\apply{\Np}^I}. Thus the identical map induces an isomorphism
      \m{\functionhead{\id}{\order[{\YF}]}{\apply{\apply{\Np}^I,\leq}}}
      since all elements \m{x,y\in\apply{\Np}^I} automatically satisfy
      \m{\supp{x}=I=\supp{y}}. Furthermore, the function
      \m{\functionhead{h}{\apply{\apply{\Np}^I,\leq}}{\apply{\N^I,\leq}}}
      given by \m{h\apply{x} = \apply{x\apply{i}-1}_{i\in I}} obviously is
      an isomorphism, too.
\item Consider \m{J\subs I} and \m{x\in\Ysubs{J}}. Any \m{y\below x}
      fulfils \m{\supp{y}=\supp{x}\subs J}, so \m{y\in\Ysubs{J}}. Hence,
      \m{\Ysubs{J}\in\DS}. Define
      \m{\functionhead{\pr_J}{\Ysubs{J}}{\N^J}} by letting
      \m{\pr_{J}\apply{x}\defeq \apply{x\apply{j}}_{j\in J}}; conversely,
      for every element \m{y\in\N^J} define \m{\emb_{J}\apply{y}} by
      \m{\emb_{J}\apply{y}\apply{i}\defeq y\apply{i}} if \m{i\in J} and
      \m{\emb_{J}\apply{y}\apply{i}\defeq 0}, otherwise.
      The map \m{\functionhead{\emb_{J}}{\N^J}{\Ysubs{J}}} is
      inverse to \m{\pr_{J}} as all \m{x\in \Ysubs{J}} satisfy
      \m{\supp{x}\subs J}, \ie\ every \m{i\in I\setminus J} does not belong
      to \m{\supp{x}} and thus \m{x\apply{i}= 0}. For the same
      reason we also have \m{\supp{x}=\supp{\pr_{J}\apply{x}}} for all
      \m{x\in\Ysubs{J}}, whence \m{\pr_{J}} is a homomorphism. Besides,
      the equality \m{\supp{y}=\supp{\emb_{J}\apply{y}}} holds for all
      \m{y\in\N^J}, so \m{\emb_{J}} is a homomorphism, too.
\item The inclusion
      \m{\N^I\supseteq\YF\cup\bigcup_{i\in I} \Ysubs{I\setminus\set{i}}} holds
      by definition. Let moreover \m{x\in \N^J\setminus\YF}, then
      \m{\supp{x}\subsetneq I}, so there exists some \m{i\in
      I\setminus\supp{x}}, \ie\ \m{\supp{x}\subs I\setminus\set{i}}.
      This proves that \m{x\in\bigcup_{i\in I}\Ysubs{I\setminus\set{i}}}.
      \qedhere
\end{enumerate}
\end{proof}
\par

\begin{theorem}\label{thm:downsets-of-poset-on-N-to-I}
We have\/ \m{\abs{\DS[{\N^I,\below}]}=\aleph_{0}}
for all finite \m{I\neq \emptyset}.
\end{theorem}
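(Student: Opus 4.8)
The plan is to reduce the theorem to Corollary~\ref{cor:downsets-of-N-to-I} by exploiting the decomposition of \m{\apply{\N^I,\below}} provided by Lemma~\ref{lem:poset-on-N-to-I}, arguing by induction on the (finite) cardinality of~\m{I}. Throughout I abbreviate \m{\order\defeq\apply{\N^I,\below}}. The lower bound \m{\abs{\DS}\geq\aleph_{0}} is settled exactly as in the proof of Corollary~\ref{cor:downsets-of-N-to-I}: the principal downsets \m{\downset{\set{\apply{n,\dotsc,n}}}} with \m{n\in\Np} form an infinite family of pairwise distinct downsets of~\m{\order}, since the constant tuple \m{\apply{n,\dotsc,n}} has full support~\m{I}, lies in the downset it generates, but lies in none of those generated by a constant tuple with strictly smaller value.

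For the upper bound I induct on \m{\abs{I}}, establishing \m{\abs{\DS[{\apply{\N^I,\below}}]}\leq\aleph_{0}} for every finite set~\m{I}. If \m{I=\emptyset}, then \m{\N^I} is a singleton and possesses exactly two downsets, so there is nothing to do. Now suppose \m{I\neq\emptyset} and that the bound has been proved for all finite sets of strictly smaller cardinality. By the first two parts of Lemma~\ref{lem:poset-on-N-to-I}, the set \m{\YF} and the sets \m{\Ysubs{I\setminus\set{i}}} for \m{i\in I} are downsets of~\m{\order}; hence Lemma~\ref{lem:ds-to-subposets} shows that intersecting an arbitrary \m{X\in\DS} with any one of them yields a downset of the corresponding induced subposet. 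Therefore the assignment sending \m{X\in\DS} to the pair consisting of \m{X\cap\YF} and the family \m{\apply{X\cap\Ysubs{I\setminus\set{i}}}_{i\in I}} maps \m{\DS} into the product
\[
\DS[{\order[{\YF}]}]\times\prod_{i\in I}\DS[{\order[{\Ysubs{I\setminus\set{i}}}]}].
\]
This map is injective, for by the last part of Lemma~\ref{lem:poset-on-N-to-I} we have \m{\N^I=\YF\cup\bigcup_{i\in I}\Ysubs{I\setminus\set{i}}}, whence every downset \m{X} is the union of the listed pieces and so is uniquely recovered from them.

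It remains to verify that the displayed product is countable. By Lemma~\ref{lem:poset-on-N-to-I} the subposet \m{\order[{\YF}]} is isomorphic to \m{\apply{\N,\leq}^I}, so Lemma~\ref{lem:iso-bij-DS} combined with Corollary~\ref{cor:downsets-of-N-to-I} (applicable because \m{I\neq\emptyset}) yields \m{\abs{\DS[{\order[{\YF}]}]}=\aleph_{0}}; similarly \m{\order[{\Ysubs{I\setminus\set{i}}}]} is isomorphic to \m{\apply{\N^{I\setminus\set{i}},\below}}, so Lemma~\ref{lem:iso-bij-DS} together with the induction hypothesis (note that \m{\abs{I\setminus\set{i}}<\abs{I}}) yields \m{\abs{\DS[{\order[{\Ysubs{I\setminus\set{i}}}]}]}\leq\aleph_{0}}. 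Since \m{I} is finite, the product is a finite product of countable sets and hence countable, so the injection forces \m{\abs{\DS}\leq\aleph_{0}}. Together with the lower bound this gives \m{\abs{\DS[{\N^I,\below}]}=\aleph_{0}}.

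I do not foresee a genuine obstacle; the points that deserve a little care are to treat \m{I=\emptyset} honestly as the base of the induction (so as not to argue circularly about finiteness) and to keep in mind that the factors of the product are countable precisely because \m{I} is finite. A route that bypasses the induction is to verify, via a short pigeonhole argument over the finitely many possible supports together with Lemma~\ref{lem:dir-prod-of-wpos}, that \m{\apply{\N^I,\below}} is itself a well\dash{}partial order; then Corollary~\ref{cor:ds-of-dcc-cntbl-wpo} delivers the countable upper bound at once.
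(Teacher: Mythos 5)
Your proof is correct and follows essentially the same route as the paper: the same induction on \m{\abs{I}} using the decomposition of Lemma~\ref{lem:poset-on-N-to-I}, the injective map into the product of downset lattices, and Lemma~\ref{lem:iso-bij-DS} with Corollary~\ref{cor:downsets-of-N-to-I}. The only cosmetic difference is that you obtain the lower bound from an explicit infinite family of principal downsets of \m{\apply{\N^I,\below}}, whereas the paper observes that \m{\mathord{\below}\subs\mathord{\leq}} makes every downset of \m{\apply{\N,\leq}^I} a downset of \m{\apply{\N^I,\below}}; both are fine.
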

\begin{proof}
Since \m{\mathord{\below}\subs\mathord{\leq}}, we clearly have
\m{\DS[{\N^I,\leq}]\subs\DS[{\N^I,\below}]}, which together with
Corollary~\ref{cor:downsets-of-N-to-I} proves that
\m{\abs{\DS[{\N^I,\below}]}\geq
   \abs{\DS[{\apply{\N,\leq}^I}]}=\aleph_{0}} for all finite
non\dash{}empty \m{I}. We shall prove by induction on \m{\abs{I}} that
\m{\abs{\DS[{\N^I,\below}]}\leq\aleph_{0}} holds for all finite
sets \m{I}. The basis is the case \m{\abs{I}=0}, \ie\ \m{I=\emptyset}. Then
\m{\abs{\N^I} = 1}, so we are dealing with a finite poset having only
finitely many downsets. Now assume \m{\abs{I}>0}, \ie\ \m{I\neq\emptyset},
and suppose we know the truth of the claim already for all finite \m{J},
\m{\abs{J}<\abs{I}}. In particular, we have the induction hypothesis for
all \m{I\setminus\set{i}} where \m{i\in I}. We define
\begin{equation*}
\function{\delta}{\DS[{\N^I,\below}]}{%
     \DS[{\order[\YF]}]\times \prod_{i\in
     I}\DS[{\order[{\Ysubs{I\setminus\set{i}}}]}]}%
     {X}{\apply{X\cap\YF,\apply{X\cap\Ysubs{I\setminus\set{i}}}_{i\in I}}.}
\end{equation*}
By Lemma~\ref{lem:poset-on-N-to-I}\eqref{item:YF-downset},
                                  \eqref{item:YsubsJ-downset}
in combination with Lemma~\ref{lem:ds-to-subposets}, this map is
well\dash{}defined. Moreover, due to
Lemma~\ref{lem:poset-on-N-to-I}\eqref{item:N-to-I-downset-decomposition},
we have
\begin{equation*}
\apply{X\cap\YF}\cup
   \bigcup_{i\in I}\apply{X\cap\Ysubs{I\setminus\set{i}}}
   =X\cap\apply{\YF\cup\bigcup_{i\in I}\Ysubs{I\setminus\set{i}}}
   =X\cap\N^I = X,
\end{equation*}
so~\m{\delta} is injective. Hence,
\m{\abs{\DS[{\N^I,\below}]}\leq
   \abs{\DS[{\order[{\YF}]}]\times
   \prod_{i\in I}\DS[{\order[{\Ysubs{I\setminus\set{i}}}]}]}}.
Since for \m{i\in I}, we have
\m{\order[{\Ysubs{I\setminus\set{i}}}]
   \cong\apply{\N^{I\setminus\set{i}},\below}}
by Lemma~\ref{lem:poset-on-N-to-I}\eqref{item:YsubsJ-downset},
Lemma~\ref{lem:iso-bij-DS} together with the induction hypothesis yields
\m{\abs{\DS[{\order[{\Ysubs{I\setminus\set{i}}}]}]}
  =\abs{\DS[{\N^{I\setminus\set{i}},\below}]}\leq\aleph_{0}}.
Similarly, \m{\order[\YF]\cong\apply{\N,\leq}^I}
by Lemma~\ref{lem:poset-on-N-to-I}\eqref{item:YF-downset}, so
Lemma~\ref{lem:iso-bij-DS} and Corollary~\ref{cor:downsets-of-N-to-I}
jointly imply
\m{\abs{\DS[{\order[{\YF}]}]}=\abs{\DS[{\apply{\N,\leq}^I}]}=\aleph_{0}}.
Consequently, as a finite product of countable sets, one of
which is infinite, the co\dash{}domain of~\m{\delta} has
cardinality~\m{\aleph_{0}}, whence \m{\abs{\DS[{\N^I,\below}]}} is
countable.
\end{proof}
\par
As a consequence the number of clones on a fixed set determined by
disjunctions of finitely many unary predicates is countable.
\begin{corollary}\label{cor:clones-at-most-countable}
For every finite unary relational language~\m{\Gamma} we have
\[\abs{\lset{\Pol{Q}}{Q\subs\DD}} \leq \aleph_0.\]
\end{corollary}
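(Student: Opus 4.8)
The plan is to obtain the bound by a direct appeal to the machinery assembled in the previous two sections, specialising the generic index set to the (finite) relational language. First I would dispose of the degenerate case $\Gamma=\emptyset$ separately: then $\DD=\DD[\emptyset]=\emptyset$, so $\powerset{\DD}=\set{\emptyset}$ and there is precisely one polymorphism clone determined by crosses over~$\Gamma$, namely $\Pol{\emptyset}=\Ops$; hence the set $\lset{\Pol{Q}}{Q\subs\DD}$ is a singleton and the inequality holds trivially.

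For finite $\Gamma\neq\emptyset$ I would simply chain the two established upper bounds. Corollary~\ref{cor:upper-bound}, applied with the index set $I\defeq\Gamma$, tells us that $\abs{\lset{\Pol{Q}}{Q\subs\DD}}=\abs{\im{\PolOp}}\leq\abs{\DS[{\N^\Gamma,\below}]}$. Since $\Gamma$ is finite and non-empty, the poset $\apply{\N^\Gamma,\below}$ is exactly an instance of the poset $\apply{\N^I,\below}$ treated in Theorem~\ref{thm:downsets-of-poset-on-N-to-I}, so that result yields $\abs{\DS[{\N^\Gamma,\below}]}=\aleph_0$. Combining the two gives $\abs{\lset{\Pol{Q}}{Q\subs\DD}}\leq\aleph_0$, which is the claim.

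I do not expect any genuine obstacle here: all of the substantive work---constructing the encoding~$I$, verifying that $\ker I\subs\ker\PolOp$ via Proposition~\ref{prop:encoding-implies-Pol}, and the inductive counting argument of Theorem~\ref{thm:downsets-of-poset-on-N-to-I} resting on \name{Dickson}'s Lemma---has already been carried out. The only minor care points are remembering the empty-language case and observing that the abstract upper bound of Corollary~\ref{cor:upper-bound} is phrased for precisely the poset whose downsets Theorem~\ref{thm:downsets-of-poset-on-N-to-I} counts. One could alternatively route the argument through the order embedding~$\psi$ of Proposition~\ref{prop:upper-bound-by-injection} instead of Corollary~\ref{cor:upper-bound}, which would give the same cardinality bound; I would mention this only as a concluding remark.
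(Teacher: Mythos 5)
Your proposal is correct and follows essentially the same route as the paper: combine Corollary~\ref{cor:upper-bound} with Theorem~\ref{thm:downsets-of-poset-on-N-to-I}, treating \m{\Gamma=\emptyset} separately since the theorem requires \m{I\neq\emptyset}. The paper handles the empty case by noting \m{\DS[{\N^{\emptyset},\below}]} has two elements rather than via \m{\DD[\emptyset]=\emptyset}, but this is an immaterial difference.
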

\begin{proof}
From Corollary~\ref{cor:upper-bound} we know
\m{\abs{\lset{\Pol{Q}}{Q\subs\DD}} \leq
\abs{\DS[{\N^\Gamma,\below}]}}.
By Theorem~\ref{thm:downsets-of-poset-on-N-to-I} the latter is
countably infinite if~\m{\Gamma\neq \emptyset}, and it has two elements
if \m{\Gamma=\emptyset} (in this case we are dealing only with the
clone of all operations).
\end{proof}

This already demonstrates that a classification of such clones (as
requested
in~\cite[Section~6]{CreignouHermannKrokhinSalzerComplexityOfClausalConstraintsOverChains})
is not a hopeless task.
\par

As a final step we wish to show that in all relevant cases, our
cardinality bound from Corollary~\ref{cor:clones-at-most-countable} is
tight. This generalizes an argument given
in~\cite[Proposition~3.1]{VargasCRelsCclones} regarding the number of
clones determined by (mixed) clausal relations (an important subcase of
the relations considered
in~\cite{CreignouHermannKrokhinSalzerComplexityOfClausalConstraintsOverChains}).

\begin{proposition}\label{prop:infinitely-many-dd-clones}
If\/~\m{\Gamma} is any unary relational language with
carrier set~\m{\CarrierSet} containing a non\dash{}trivial basic unary
relation~\m{\gamma\in\Gamma} such that
\m{\emptyset\neq \gamma\subsetneq \CarrierSet}, then the lattice
\m{\apply{\lset{\Pol{Q}}{Q\subs\DD},\mathord{\subs}}} contains a strictly
descending \nbdd{\omega}chain of finitely related clones over~\m{\DD}.
\end{proposition}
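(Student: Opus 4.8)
The plan is to exhibit explicitly a strictly decreasing sequence of clones of the form \m{F_n = \Pol{\set{\rho_n}}}, where each \m{\rho_n} is a single cross built from the non\dash{}trivial relation~\m{\gamma}. Since \m{\gamma} is non\dash{}trivial, Lemma~\ref{lem:parameter-reconstruction} and the remarks after it guarantee that the parameters of such crosses are faithfully recorded by their patterns, and Lemma~\ref{lem:downset-implies-Pol} tells us that an increase in pattern (with respect to~\m{\below}) forces a decrease of the associated polymorphism clone. So the natural candidate is \m{\rho_n \defeq \Runary{\gamma,\dotsc,\gamma}} with \m{\gamma} repeated \m{n} times (for \m{n\in\Np}); then \m{\pt{\rho_n} \below \pt{\rho_{n+1}}} for all~\m{n} (same support \m{\set{\gamma}}, strictly larger value), so by Lemma~\ref{lem:downset-implies-Pol} we immediately get the descending chain \m{F_1 \supseteq F_2 \supseteq \dotsm}, and each \m{F_n = \Pol{\set{\rho_n}}} is finitely related by definition. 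What remains is to show the inclusions are \emph{strict}.

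To see strictness at step~\m{n}, I would construct for each~\m{n} an explicit \nbdd{k}ary operation \m{f_n} (for suitable small~\m{k}, likely \m{k=n} or \m{k=n+1}) that preserves \m{\rho_n} but not \m{\rho_{n+1}}; equivalently, \m{f_n \in F_n \setminus F_{n+1}}. Fix \m{a\in\gamma} and \m{b\in\CarrierSet\setminus\gamma} (both exist by non\dash{}triviality). The relation \m{\rho_{n+1}} on \m{\CarrierSet[n+1]} consists of all tuples having \emph{at least one} coordinate in~\m{\gamma}; its complement consists of tuples all of whose entries avoid~\m{\gamma}. The idea is a "diagonal" witness: take as the \m{n+1} input columns the tuples \m{r_j} (\m{1\le j\le n+1}) where \m{r_j} has \m{b} in position~\m{j} and \m{a} elsewhere — each \m{r_j} lies in \m{\rho_{n+1}} since it has \m{n\ge 1} coordinates equal to~\m{a\in\gamma}. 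I want an \nbdd{(n+1)}ary \m{f_n} which on these columns produces the all\dash{}\m{b} tuple (which is not in \m{\rho_{n+1}}), thereby witnessing \m{f_n\notin F_{n+1}}, while still preserving \m{\rho_n}. A clean way to obtain such an~\m{f_n} is to define it via a map on indices: let \m{f_n} be the operation that, applied to columns \m{r_1,\dotsc,r_{n+1}}, returns at coordinate~\m{i} the entry \m{r_{\sigma(i)}(i)} for a cleverly chosen permutation/selection \m{\sigma}, i.e.\ a "pattern" operation or a near\dash{}projection that mixes coordinates. One then checks: (i) on the specific diagonal input above, \m{f_n} yields the all\dash{}\m{b} tuple (choose \m{\sigma(i)=i} so coordinate~\m{i} reads \m{r_i(i)=b}); (ii) \m{f_n} preserves \m{\rho_n}, which holds because any \m{n} columns from \m{\rho_n} jointly cover — by a counting/pigeonhole argument on supports of size \m{\ge 1} within blocks — enough positions in~\m{\gamma} that the mixed output still has a coordinate in~\m{\gamma}.

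The main obstacle is precisely verifying clause (ii): making sure the chosen operation \m{f_n} genuinely respects \m{\rho_n} for \emph{all} choices of \m{n} input tuples from \m{\rho_n}, not just the special diagonal family. The safe route — and the one I would actually follow — is to mimic the construction of \cite[Proposition~3.1]{VargasCRelsCclones} referenced in the excerpt: there the separating operations for clausal relations are built as compositions of a near\dash{}unanimity\dash{}like "majority on coordinates" function with projections, exploiting that a tuple fails \m{\rho_m} only when \emph{all} \m{m} entries miss~\m{\gamma}. Translating that: define \m{f_n} so that the output tuple at each coordinate is the value appearing in a \emph{strict majority} of input columns at that coordinate, with ties broken toward some fixed input — an \nbdd{n}ary near\dash{}unanimity operation exists as soon as \m{n\ge 3}, and for the two bottom steps \m{n=1,2} one can argue separately (e.g.\ \m{F_1} is the clone of all operations preserving the single\dash{}coordinate relation \m{\gamma\times\CarrierSet^{0}}, easily seen to strictly contain \m{F_2}). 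One verifies that such a majority\dash{}type \m{f_n} cannot turn \m{n} tuples each having a \m{\gamma}\dash{}coordinate into a tuple with no \m{\gamma}\dash{}coordinate, because by pigeonhole some coordinate position receives a \m{\gamma}\dash{}value from more than half the columns; meanwhile on the diagonal family above it collapses to all\dash{}\m{b}. Hence \m{f_n\in F_n\setminus F_{n+1}}, the chain is strictly descending, and all its members are finitely related, completing the proof.
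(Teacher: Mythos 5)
Your overall strategy matches the paper's: take \m{\rho_n=\Runary{\gamma,\dotsc,\gamma}} (with \m{\gamma} repeated \m{n} times), get the descending chain from Lemma~\ref{lem:downset-implies-Pol} (the paper uses Lemma~\ref{lem:duplicating-parameters} directly), and separate consecutive clones by an explicit operation. However, the separating operation is where your argument breaks down, and the gap is not just a missing verification but an inconsistency. You ask for an \nbdd{(n+1)}ary \m{f_n} that (i) sends the ``inverted diagonal'' rows \m{\apply{a,\dotsc,a,b,a,\dotsc,a}} (one \m{b\notin\gamma}, the rest \m{a\in\gamma}) to~\m{b}, and (ii) preserves~\m{\rho_n}. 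For \m{n\geq 2} these two demands are incompatible: take the columns \m{s_1,\dotsc,s_{n+1}\in\CarrierSet[n]} where \m{s_j} has \m{b} in position~\m{j} and \m{a} elsewhere for \m{j\leq n}, and \m{s_{n+1}=\apply{a,\dotsc,a}}. Every \m{s_j} lies in \m{\rho_n} (each has at least one coordinate equal to \m{a\in\gamma}), yet every row of the resulting \m{n\times\apply{n+1}} matrix is again of the form ``one \m{b}, \m{n} copies of \m{a}'', so (i) forces the output to be the all\dash{}\m{b} tuple, which is not in \m{\rho_n}. Your fallback via a majority operation fares no better: on your diagonal family each row has \m{a} in \m{n} of the \m{n+1} positions, so a majority operation outputs \m{a} and the image tuple \emph{does} lie in \m{\rho_{n+1}}, destroying the non\dash{}preservation witness; and for preservation the pigeonhole only guarantees that some row has at least \emph{two} \m{\gamma}\dash{}entries among \m{n+1}, which is not a majority once \m{n\geq 3}.

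The fix is to orient the diagonal the other way and to use a threshold rather than a majority. The paper fixes \m{1\in\gamma} and \m{0\in\CarrierSet\setminus\gamma} and defines the \nbdd{m}ary \m{f} by \m{f\apply{\liste{x}{m}}=0} if at least \m{m-1} of the arguments equal~\m{0}, and \m{f\apply{\liste{x}{m}}=1} otherwise. The witnessing columns \m{e_j} carry the \m{\gamma}\dash{}element~\m{1} on the diagonal and \m{0} off it, so every row has \m{m-1} zeros and \m{f} produces the all\dash{}zero tuple \m{\notin\rho_m}; whereas any \m{m} columns from the \nbdd{(m-1)}ary relation \m{\rho_{m-1}} are each nonzero, so by pigeonhole some row has at least two nonzero entries and \m{f} outputs \m{1\in\gamma} there, giving \m{f\in\Pol{\set{\rho_{m-1}}}\setminus\Pol{\set{\rho_m}}}. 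Your proposal identifies the right relations and the right proof shape, but as written it does not produce a valid separating operation.
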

\begin{proof}
The proof of this fact is constructive. Fix \m{\gamma\in\Gamma} with
the properties claimed in the proposition. For every \m{m\in\Np} we let
\m{\rho_m\defeq \Runary{\gamma,\dotsc,\gamma}}, where the
parameter~\m{\gamma} occurs exactly \m{m}~times.
By Lemma~\ref{lem:duplicating-parameters}, we have
\m{\Pol{\set{\rho_{m}}}\subs\Pol{\set{\rho_{m-1}}}} for all \m{m\in\N},
\m{m\geq 2}.
We only have to prove that these inclusions are strict. This will be done
by exhibiting an \nbdd{m}ary function \m{f\in\Pol{\set{\rho_{m-1}}}} that
does not preserve~\m{\rho_m}.
\par
Since \m{\gamma\subsetneq\CarrierSet}, there is an element
\m{0\in \CarrierSet} such that \m{0\notin \gamma}. Moreover, as
\m{\gamma} is not empty, there is some element
\m{1\in \CarrierSet\setminus\set{0}} such that \m{1\in\gamma}. We define
\m{f\apply{\liste{x}{m}} = 0} if at least \m{m-1} entries
in~\m{\apply{\liste{x}{m}}} are equal to~\m{0} and we put
\m{f\apply{\liste{x}{m}} = 1} everywhere else.
\par
Let \m{e_j\in\CarrierSet[m]} be the tuple whose \nbdd{j}th entry is~\m{1}
and which is~\m{0} otherwise. Since \m{1\in\gamma}, we have
\m{\liste{e}{m}\in \rho_m}. However, the definition of~\m{f} gives us
\m{\composition{f}{\liste{e}{m}} = \mathbf{0}}, the tuple containing only
zeros. As \m{0\notin \gamma}, we have \m{\mathbf{0}\notin\rho_m} and
hence \m{f\notin\Pol{\set{\rho_m}}}.
\par
On the other hand, if \m{r_1,\dotsc,r_m\in \rho_{m-1}} and we consider
these tuples as columns of an \nbdd{((m-1)\times m)}matrix, then each
column contains a non\dash{}zero entry (because
\m{\mathbf{0}\notin\rho_{m-1}}). As \m{m>m-1}, by the pigeonhole
principle there must be one row~\m{x} of the matrix that contains at
least two entries distinct from zero. In other words, there cannot be
more than \m{m-2} zero entries in~\m{x}. Now applying~\m{f} to~\m{x}
yields \m{f(x) = 1\in\gamma}.
Therefore, \m{\composition{f}{\liste{r}{m}}\in\rho_{m-1}}.
\end{proof}

Combining Corollary~\ref{cor:clones-at-most-countable} and
Proposition~\ref{prop:infinitely-many-dd-clones} we can pinpoint the
exact number of clones determined by disjunctions of
non\dash{}trivial unary relations from a finite parameter
set~\m{\Gamma}. This answers, in particular, the
question regarding the number of clausal clones on finite sets that was
stated to be open in~\cite{Edith-thesis,%
                           BehVarUniqueInclusionsOfMaxCclones2018}.
\begin{corollary}\label{cor:tightness}
If\/~\m{\Gamma} is a finite unary relational language with carrier
set~\m{\CarrierSet} containing a non\dash{}trivial basic unary
relation~\m{\gamma\in\Gamma} such that
\m{\emptyset\neq \gamma\subsetneq \CarrierSet}, then we have\/
\m{\abs{\lset{\Pol{Q}}{Q\subs\DD \text{ finite}}}=
   \abs{\lset{\Pol{Q}}{Q\subs\DD}}=\aleph_0}.
\end{corollary}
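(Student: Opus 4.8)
The plan is to establish the chain of inequalities
\[
\aleph_0\leq\abs{\lset{\Pol{Q}}{Q\subs\DD\text{ finite}}}
       \leq\abs{\lset{\Pol{Q}}{Q\subs\DD}}\leq\aleph_0,
\]
from which both displayed cardinalities are immediately pinned down to~\m{\aleph_0}. The middle inequality is a triviality: every finite subset of~\m{\DD} is in particular a subset of~\m{\DD}, so the set \m{\lset{\Pol{Q}}{Q\subs\DD\text{ finite}}} is contained in \m{\lset{\Pol{Q}}{Q\subs\DD}}. The rightmost inequality is exactly Corollary~\ref{cor:clones-at-most-countable}, which is applicable since~\m{\Gamma} is assumed finite (and non-empty, because it contains~\m{\gamma}); this is the only point where finiteness of~\m{\Gamma} is used.

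For the leftmost inequality I would invoke Proposition~\ref{prop:infinitely-many-dd-clones}, whose hypothesis is met verbatim: \m{\Gamma} contains a basic unary relation~\m{\gamma} with \m{\emptyset\neq\gamma\subsetneq\CarrierSet}. That proposition yields a strictly descending \nbdd{\omega}chain
\[
\Pol{\set{\rho_1}}\supsetneq\Pol{\set{\rho_2}}\supsetneq\Pol{\set{\rho_3}}\supsetneq\dotsm,
\]
where \m{\rho_m=\Runary{\gamma,\dotsc,\gamma}\in\DD} with~\m{\gamma} repeated \m{m} times. The one thing to observe is that each clone in this chain is the polymorphism clone of a \emph{finite} set of disjunctively definable relations — namely of the singleton \m{\set{\rho_m}\subs\DD}. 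Strictness of the chain makes the clones \m{\Pol{\set{\rho_m}}}, \m{m\in\Np}, pairwise distinct, so \m{\lset{\Pol{Q}}{Q\subs\DD\text{ finite}}} already contains a countably infinite subset and therefore has cardinality at least~\m{\aleph_0}.

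Putting the three bounds together gives \m{\abs{\lset{\Pol{Q}}{Q\subs\DD\text{ finite}}}=\abs{\lset{\Pol{Q}}{Q\subs\DD}}=\aleph_0}, as claimed. I anticipate no genuine difficulty: this is purely a sandwich between the countability bound of Corollary~\ref{cor:clones-at-most-countable} and the explicit infinite descending chain of Proposition~\ref{prop:infinitely-many-dd-clones}, the only mildly non-obvious step being the remark that this chain already lives on the ``finitely related'' side of the classification because its members are each determined by a single relation.
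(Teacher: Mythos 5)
Your proposal is correct and coincides with the paper's own (essentially one-line) argument: the paper likewise obtains the upper bound from Corollary~\ref{cor:clones-at-most-countable} and the lower bound from the strictly descending chain of singleton-determined clones in Proposition~\ref{prop:infinitely-many-dd-clones}, sandwiching both cardinalities at~\m{\aleph_0}. Your explicit remark that the chain already lives among the finitely related clones is exactly the observation needed to cover the first of the two displayed cardinalities.
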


\begin{remark}\label{rem:countable-Gamma}
When focussing only on finitely related polymorphism clones, we can
extend the scope of our arguments a little bit. That is to say, if we
restrict the encoding map~\m{I}
(cf.\ Proposition~\ref{prop:encoding-implies-Pol})
to~\m{\powersetfin{\DD}} and only consider clones \m{\Pol{Q}} given by a
finite subset \m{Q\subs\DD}, we can still obtain the upper
bound~\m{\aleph_0} on their cardinality when~\m{\Gamma} is countably
infinite. Namely, if \m{Q =\set{\liste{\rho}{N}}} for some \m{N\in\N},
then \m{I\apply{Q} =
\bigcup_{i=1}^N \downset[{\apply{\N^\Gamma,\below}}]{\pt{\rho_i}}} is a
finitely generated downset, where for each \m{i\in\set{1,\dotsc,N}} the
tuples in the principal downset
\m{\downset[{\apply{\N^\Gamma,\below}}]{\pt{\rho_i}}} have a fixed
finite support \m{J_i\subs\Gamma}. Hence, all tuples in \m{I\apply{Q}}
have their support within the finite set
\m{J\apply{Q} = \bigcup_{i=1}^N J_i\subs\Gamma}. By projecting the
patterns to these indices (as in the proof of
Lemma~\ref{lem:poset-on-N-to-I} this does not change the support of the
tuples), we obtain a different encoding
\m{K(Q) = \bigcup_{i=1}^N
\downset[{\apply{\N^{J\apply{Q}},\below}}]{\pr_{J\apply{Q}}\pt{\rho_i}}},
which is a downset of \m{\apply{\N^{J\apply{Q}},\below}}. Therefore,
\m{K\apply{Q} \in \bigcup_{J\in\powersetfin{\Gamma}} \DS[{\N^J,\below}]}.
Since \m{\Gamma} is countable, \m{\powersetfin{\Gamma}} is countable and
the codomain of~\m{K} is a countable union of countably infinite sets
(see Theorem~\ref{thm:downsets-of-poset-on-N-to-I}),
hence again countably infinite. With analogous arguments as in
Proposition~\ref{prop:encoding-implies-Pol} through Corollary~\ref{cor:upper-bound} we can thus show that
\m{\abs{\lset{\Pol{Q}}{Q\subs \DD\text{ finite}}} \leq \abs{\im K} \leq
\aleph_0}.
Regarding the analogue of Proposition~\ref{prop:encoding-implies-Pol}
we note that \m{\emptyset\neq K\apply{Q_1}\subs K\apply{Q_2}} implies
\m{J\apply{Q_1}= J\apply{Q_2}} and so
\m{\pr_{J\apply{Q_1}}\pt{\rho}\below \pr_{J\apply{Q_2}}\pt{\rho'}}
yields \m{\pt{\rho}\below\pt{\rho'}} for \m{\rho\in Q_1} and any
\m{\rho'\in Q_2}.
As \m{\supp{\pt{\rho}} = \supp{\pt{\rho'}} \subs J\apply{Q_2}} is
finite, Corollary~\ref{cor:duplicating-permuting-parameters}
and Lemma~\ref{lem:downset-implies-Pol} are applicable to derive the
conclusion of Proposition~\ref{prop:encoding-implies-Pol}.
\par
%
%
Tightness of the cardinality bound is again provided by
Proposition~\ref{prop:infinitely-many-dd-clones}, which does not require
finiteness of~\m{\Gamma}.
\end{remark}

Combining these explanations with Corollary~\ref{cor:tightness} we can
strengthen our result as follows:
\begin{corollary}\label{cor:tightness-improved}
If\/~\m{\Gamma} is a countable unary relational language
on the carrier
set~\m{\CarrierSet} containing a non\dash{}trivial basic unary
relation~\m{\gamma\in\Gamma} such that
\m{\emptyset\neq \gamma\subsetneq \CarrierSet}, then we have\/
\m{\abs{\lset{\Pol{Q}}{Q\subs\DD \text{ finite}}}=\aleph_0}.
\end{corollary}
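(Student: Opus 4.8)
The plan is to assemble the two halves of the bound that have already been prepared above. For the upper bound \m{\abs{\lset{\Pol{Q}}{Q\subs\DD\text{ finite}}}\leq\aleph_0}, I would invoke the construction carried out in Remark~\ref{rem:countable-Gamma}: any finite \m{Q\subs\DD} has a finite joint support \m{J\apply{Q}=\bigcup_{i=1}^N J_i\subs\Gamma}, so the support\dash{}restricted encoding~\m{K\apply{Q}} lands in \m{\bigcup_{J\in\powersetfin{\Gamma}}\DS[{\N^J,\below}]}. Since \m{\Gamma} is countable, \m{\powersetfin{\Gamma}} is countable, and each \m{\DS[{\N^J,\below}]} occurring in this union is countable by Theorem~\ref{thm:downsets-of-poset-on-N-to-I}; hence the codomain of~\m{K}, being a countable union of countable sets, is countable, so \m{\abs{\im K}\leq\aleph_0}. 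The analogues of Proposition~\ref{prop:encoding-implies-Pol} through Corollary~\ref{cor:upper-bound} spelled out in the remark yield \m{\ker K\subs\ker\PolOp} on \m{\powersetfin{\DD}}, and therefore \m{\abs{\lset{\Pol{Q}}{Q\subs\DD\text{ finite}}}\leq\abs{\im K}\leq\aleph_0}.

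For the matching lower bound I would appeal directly to Proposition~\ref{prop:infinitely-many-dd-clones}, whose proof never uses finiteness of~\m{\Gamma}: from the fixed non\dash{}trivial \m{\gamma\in\Gamma} it produces the relations \m{\rho_m=\Runary{\gamma,\dotsc,\gamma}\in\DD} together with a strictly descending \nbdd{\omega}chain \m{\Pol{\set{\rho_1}}\supsetneq\Pol{\set{\rho_2}}\supsetneq\dotsm}. Each \m{\Pol{\set{\rho_m}}} is determined by the single---hence finite---set \m{\set{\rho_m}\subs\DD}, and strictness of the chain forces these clones to be pairwise distinct, exhibiting \m{\aleph_0} many members of \m{\lset{\Pol{Q}}{Q\subs\DD\text{ finite}}}.

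Combining the two bounds gives \m{\abs{\lset{\Pol{Q}}{Q\subs\DD\text{ finite}}}=\aleph_0}, which is precisely the assertion; this refines Corollary~\ref{cor:tightness} by weakening the hypothesis from ``\m{\Gamma} finite'' to ``\m{\Gamma} countable'', at the expense that, once \m{\Gamma} is infinite, only the statement for finite~\m{Q} is retained. I do not expect a genuine obstacle, since the substantive work was already carried out in Remark~\ref{rem:countable-Gamma} and Proposition~\ref{prop:infinitely-many-dd-clones}; the one point needing care is that the passage from the encoding~\m{I} to its support\dash{}restricted variant~\m{K} still supports the implication ``\m{K\apply{Q_1}\subs K\apply{Q_2}} entails \m{\Pol{Q_2}\subs\Pol{Q_1}}'', which hinges on the observation (already noted in the remark) that a non\dash{}empty inclusion \m{K\apply{Q_1}\subs K\apply{Q_2}} forces \m{J\apply{Q_1}=J\apply{Q_2}}, so that \m{\below}\dash{}comparability of the projected patterns lifts back to \m{\below}\dash{}comparability of the patterns themselves.
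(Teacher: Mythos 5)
Your proposal is correct and follows essentially the same route as the paper: the upper bound is exactly the argument of Remark~\ref{rem:countable-Gamma} (finite joint support \m{J\apply{Q}}, the restricted encoding~\m{K} into the countable union \m{\bigcup_{J\in\powersetfin{\Gamma}}\DS[{\N^J,\below}]}, and the transferred analogue of Proposition~\ref{prop:encoding-implies-Pol} through Corollary~\ref{cor:upper-bound}), while tightness comes from Proposition~\ref{prop:infinitely-many-dd-clones}, which indeed never uses finiteness of~\m{\Gamma}. The one delicate point you single out --- that a non\dash{}empty inclusion \m{K\apply{Q_1}\subs K\apply{Q_2}} forces \m{J\apply{Q_1}=J\apply{Q_2}} so that \m{\below}\dash{}comparability lifts back to the unprojected patterns --- is precisely the caveat the paper itself records in the remark.
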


\section*{Acknowledgements}
The authors are grateful to one of the anonymous referees suggesting the
idea for the alternative proof of Corollary~\ref{cor:upper-bound} that
is presented in Proposition~\ref{prop:upper-bound-by-injection}.

\input{clonesdisjunary.bibl}

\end{document}